\newcommand\reallywidehat[1]{%
\savestack{\tmpbox}{\stretchto{%
  \scaleto{%
    \scalerel*[\widthof{\ensuremath{#1}}]{\kern-.6pt\bigwedge\kern-.6pt}%
    {\rule[-\textheight/2]{1ex}{\textheight}}
  }{\textheight}%
}{0.75ex}}%
\stackon[1pt]{#1}{\tmpbox}%
}
\theoremstyle{plain}
\newtheorem{theorem}{Theorem}[section]
\newtheorem{proposition}{Proposition}[section]
\theoremstyle{definition}
\newtheorem{definition}{Definition}[section]
\newcommand{\su}[3]{\displaystyle \sum_{{#1}}^{{#2}}{#3}}
\newcommand{\itg}[3]{\displaystyle \int_{{#1}}^{{#2}}{#3}}
\title{Mitigation of Artifacts in Multistatic \& Passive Radar Imaging Using Microlocal Analysis}
\author{
 David McMahon \\
  Department of Mathematics \& Statistics\\
  University of Limerick\\
  \texttt{david.j.mcmahon@ul.ie} \\
   \And
 Clifford Nolan \\
  Department of Mathematics \& Statistics\\
  University of Limerick\\
  \texttt{clifford.nolan@ul.ie} \\
}
\begin{document}
\maketitle
\begin{abstract}
In the analysis of many synthetic aperture radar (SAR) experiments the possibility of passive background signals being recorded simultaneously and corrupting the image is often overlooked. Our work addresses this by considering the multistatic experiment where two stationary emitters are `always on’ so there is `crosstalk' between their signals. The model for the radar data is given by a Fourier integral operator, and we assume that the data cannot be separated into contributions from individual emitters. Using techniques of microlocal analysis, we show that `crosstalk' between emitters leads to artifacts in the image and we determine their locations relative to the scatterers that produced the data.

 To combat the harmful effects of crosstalk, we develop methods that allow us to create an image of a region of interest (ROI) that is free from such artifacts. The first method makes use of a carefully designed data acquisition geometry to localise artifacts away from a ROI, and the second is an image processing technique that displaces artifacts away from a ROI. These methods are verified via numerical implementation in MATLAB. The analysis carried out here is valuable in bistatic and multistatic radar experiments, where an unwanted, passive source is also being detected, as well as in passive imaging, where one wishes to produce a high-quality image purely from uncontrolled sources of illumination.

\end{abstract}


\section{Introduction}
Synthetic aperture radar (SAR) \cite{SARinversion, tutorial, nonflat, nolan2004microlocal, fundamentals} is a high-resolution imaging technique used to create images of objects or environments remotely. SAR uses antennas on moving platforms (usually a plane or satellite) to send electromagnetic waves to objects of interest and measure the scattered waves. These measurements are called range profiles, and they associate the two-way travel time, $2t$, of the backscattered waves with objects that are at a distance of $c_0 t$ from the antenna (where $c_0$ is the speed of light). Radar data is comprised of range profiles recorded at each point along the flight track. The data is used to produce a reconstruction of the terrain that resulted in the recorded range profiles. In monostatic SAR imaging, the moving transmitter also acts as a receiver, whereas in bistatic SAR imaging, the transmitter and receiver are located on different platforms. Multistatic radar refers to a setup in which there are multiple monostatic or bistatic radar components with a shared area of coverage.

The novel aspect of this work is that we address the potential for crosstalk between various emitters in radar experiments and mitigate this effect when reconstructing an image of a scene. We do this by considering a multistatic SAR setup comprised of two stationary sources of illumination that are measured by a single moving receiver. Crosstalk is often neglected as it is typically assumed that only the signal from one's own transceiver is measured during an experiment \cite{SARinversion, nonflat}. In practice, however, there can be signals from various background sources that will also be recorded. This is especially true when imaging in urban environments, where there is a higher density of background signals present (such as those from radio towers, for example). We assume that the emitters are active simultaneously and that the data cannot be separated into the contributions measured from each. The receiver makes multiple passes over the scattering region, and as such, we expect to create a three-dimensional reconstruction of the scattering region \cite{SARinversion}. This experiment is relevant in contexts such as bistatic radar, where an uncontrolled signal is recorded during the experiment and corrupts the data. Our analysis is also applicable to passive radar imaging, where only background signals are used to produce an image. Throughout this article, it is assumed that the emitter locations are known, which may not always be the case, particularly in passive imaging experiments. However, in such a scenario, our analysis can be used in conjunction with source localisation methods, such as in \cite{Waddington_2020}, to determine the emitter locations.

Most of the important features in imaging are the points and edges between different media where scattering occurs. From a mathematical perspective, such features are modelled as a type of singularity. These singularities are encoded in the reflectivity function $V$, which captures non-smooth changes in the index of refraction of the material in which the wave is propagating. We will derive a scattering operator $F$ that maps $V$ to the scattered waves, $FV$, recorded at the various receiver locations and times. From a mathematical perspective, the SAR imaging task is to recover $V$ from the measured data, $FV$. Since exact reconstruction of $V$ from $FV$ is often extremely difficult, an acceptable compromise in most applications is to instead reconstruct the singularities of $V$. This yields an image containing the edges and shapes of the objects that are present in the scattering region.  Microlocal analysis \cite{Duistermaat2011, grigis1994microlocal, TrevesJean-François1980ItPa, hormander2009analysis, krishnan2015microlocal, greenleaf.uhlmann.90a} is a mathematical theory that describes how singularities are propagated by a certain class of operators known as Fourier integral operators (FIOs). Microlocal analysis allows us to associate singularities in the radar data ($FV$) with the singularities in the scene ($V$) that produced the data. A reconstruction can be recreated through a process called backprojection, which attempts to replace the singularities in the data in their correct location in the scene.

In practice, backprojection may migrate a singularity to an  incorrect location, resulting in the appearance of an object in the image that is not present in the scene. The appearance of such a non-existent object is known as an artifact \cite{Felea01112005, Nolan01011997, plamen, 1512397}. As we are assuming that the data cannot be separated into the contributions measured from each emitter, it will turn out that we are not able to choose a backprojection operator that correctly reconstructs the singularities from  each part of the data simultaneously. This implies that there will be artifacts caused by crosstalk in the reconstruction. We determine the locations of such artifacts in the image as a result of a known point scatterer and show that the artifacts extend over a surface parameterised by the location of the receiver. 

Using the knowledge that we have gained about these artifacts, we can devise two methods that allow us to image a region of interest (ROI) that is completely free from artifacts. The first method makes use of the fact that the extent of the surface containing artifacts depends on the receiver location. By restricting the flight track appropriately (either by design of the data acquisition geometry or by omitting certain portions of data before backprojection), it is possible to restrict the surface of artifacts such that they lie strictly outside the ROI. Our second method applies the work carried out in \cite{Felea_2007} to our multistatic experiment. This allows us to process the reconstruction by applying a series of FIOs that progressively displace artifacts further away from the ROI. We implement these methods numerically in MATLAB, and both are observed to significantly reduce the presence of crosstalk artifacts in the reconstructions.

The remainder of this paper is organised as follows. In Section \ref{forward_model_section},
we describe the model for scattered waves resulting from two stationary emitters at known locations. Using a single scattering approximation we see that this forward model is given by a sum of FIOs, each corresponding to the contribution received from each emitter. In Section \ref{surface_section}, we apply a backprojection operator and analyse the resulting reconstructions. Application of the backprojection operator yields two types of terms. The firstis a pseudodifferential operator that provides the correct reconstruction of singularities. The second type of term is the result of incorrectly perceiving the signal from one emitter as having come from the other emitter. As such, this term models the impact of crosstalk that we wish to investigate. We show that this term is an FIO, and we gain valuable insights into the artifacts it causes in the reconstruction.

In Section \ref{mitigation}, we develop two basic imaging algorithms designed to effectively remove all artifacts from a ROI. As already mentioned, these methods can carefully restrict the data acquisition geometry so that artifacts are outside a ROI; otherwise, we can perform a sequence of image processing steps that iteratively displace artifacts away from a ROI. We discuss the advantages of each method, the conditions under which they are appropriate to use each (particularly the artifact displacement method) and give attention to drawbacks they might have or when one method might be preferable over the other. In Section \ref{numerics}, we numerically illustrate and validate the theoretical results discussed throughout this article. We first simulate a SAR experiment in which `crosstalk' is present and highlight how detrimental this can be on the quality of the reconstruction. From here, each of the two methods, outlined above, are implemented to remove these artifacts from the ROI. In each case, a substantial improvement in the quality of the image is observed, thereby displaying their effectiveness.

\section{Forward model}\label{forward_model_section}
We begin by constructing the mathematical model which describes the measurements, made by a moving receiver, of the electromagnetic field resultant from two stationary emitters. The basis for this model is the scalar wave equation,

\begin{equation*}
	\bigg(\nabla^2 -\frac{1}{c^2(\mathbf{x})}\partial^2_t\bigg)U(t, \mathbf{x}) = 0,
\end{equation*}

where $\mathbf{x} \in \mathbb{R}^3$ is position in space and $t \in  \mathbb{R}$ is the time variable. Here $U$ represents one component of the electromagnetic field. 

We assume that the scene to be imaged is well separated from the region where the data is recorded, and that $c(\mathbf{x}) \equiv c_0$, in the intervening region, where $c_0$ is the speed of light in air, assumed to be constant for simplicity.

To construct the solution for $U(t, \mathbf{x})$,  
 we use the Green's function for the scalar wave equation,
 
 \begin{equation*}
 	G_0(t,\mathbf{x}) = \frac{\delta(t-\vert \mathbf{x}\vert/c_0)}{4\pi\vert \mathbf{x} \vert}\ .
 \end{equation*}

For a pair of stationary point sources, located at $\mathbf{E}_1$ and $\mathbf{E}_2$ respectively, the resultant incident field, $U^{\text{in}}$, satisfies
\begin{equation}
	\bigg(\nabla^2 -\frac{1}{c_0^2(\mathbf{x})}\partial^2_t\bigg)U^{\text{in}}(t, \mathbf{x}) = -\left(\delta(t)\delta(\mathbf{x}-\mathbf{E}_1) + \delta(t)\delta(\mathbf{x}-\mathbf{E}_2)\right),\label{incident_field}
\end{equation}

and $U^\text{in}(t, \mathbf{x}) \equiv 0$ for $t < 0$.

Using the Green's function, we obtain

\begin{equation*}
	U^{\text{in}}(t,\mathbf{x}) 
    = \frac{\delta(t - |\mathbf{x}-\mathbf{E}_1|/c_0)}{4\pi |\mathbf{x} - \mathbf{E}_1|}  +  \frac{\delta(t - |\mathbf{x}-\mathbf{E}_2|/c_0)}{4\pi |\mathbf{x} - \mathbf{E}_2|} \ .
\end{equation*}

We recall the oscillatory integral form of the Dirac delta distribution,
    \begin{equation*}
        \delta (x) = \frac{1}{2\pi} \int_{\mathbb{R}} e^{-i\omega x}d\omega,
    \end{equation*}
and use it to write
\begin{equation}
	U^{\text{in}}(t,\mathbf{x}) = \int\left(\frac{e^{-i\omega (t - \vert \mathbf{x}- \mathbf{E}_1 \vert /c_0)}}{4\pi\vert \mathbf{x}-\mathbf{E}_1\vert} + \frac{e^{-i\omega (t - \vert \mathbf{x}- \mathbf{E}_2 \vert /c_0)}}{4\pi\vert \mathbf{x}-\mathbf{E}_2\vert}\right)d\omega .\label{U_in}
\end{equation}

\subsection{Linearisation}
Up to now we have concerned ourselves only with the field of electromagnetic waves that are incident from the antenna. That is the field before any reflection of waves from objects in the scene has occurred. The full field, $U$, is comprised of both the incident waves and the scattered waves, i.e., $U = U^{\text{in}} + U^{\text{sc}}$. The full field satisfies the wave equation,
\begin{equation}
    \bigg(\nabla^2 -\frac{1}{c^2(\mathbf{x})}\partial^2_t\bigg)U(t, \mathbf{x}) = -\left(\delta(t)\delta(\mathbf{x}-\mathbf{E}_1) + \delta(t)\delta(\mathbf{x}-\mathbf{E}_2)\right).\label{full_field}
\end{equation}

We use the fact that the right hand sides of (\ref{full_field}) and (\ref{incident_field}) are equal to set
\begin{equation*}
    \bigg(\nabla^2 -\frac{1}{c^2(\mathbf{x})}\partial^2_t\bigg)U(t, \mathbf{x}) = \bigg(\nabla^2 -\frac{1}{c_0^2(\mathbf{x})}\partial^2_t\bigg)U^{\text{in}}(t, \mathbf{x}).
\end{equation*}

Using $U^{\text{sc}}(t, \mathbf{x}) = U(t,\mathbf{x}) - U^{\text{sc}}(t, \mathbf{x})$ we rearrange the above to obtain
\begin{equation}
    \bigg(\nabla^2 -\frac{1}{c_0^2}\partial^2_t\bigg)U^{\text{sc}}(t, \mathbf{x}) = -V(\mathbf{x})\partial_t^2U(t,\mathbf{x}),\label{linearised}
\end{equation}

where $V(\mathbf{x}) = \frac{1}{c_0^2} -\frac{1}{c^2(\mathbf{x})}$. $V(\mathbf{x})$ is known as the reflectivity function and contains information about how the velocity of waves in a medium located at position $\mathbf{x}$ differs from the reference speed in free space. Singularities in $V(\mathbf{x})$ occur at locations $\mathbf{x}$ where the velocity of waves, $c(\mathbf{x})$, change rapidly; a concept we will presently make precise. This happens at the sharp boundaries between different media and it follows that the singularities of $V(\mathbf{x})$ are located at these same boundaries. By reconstructing the singularities of $V$, we  will form an image that contains edges between the different structures in the scattering region.

Applying the Green's function for the wave equation to equation (\ref{linearised}) gives
\begin{equation}
    U^{\text{sc}}(t, \mathbf{x}) = \int G_0(t - \tau, \mathbf{x} - \mathbf{z}) V(\mathbf{z}) \partial_\tau^2 U(\tau, \mathbf{z})d\tau d\mathbf{z}.  \label{scatter}
\end{equation}

We now apply the {\em Born Approximation} \cite{Herman, Langenberg}, which replaces the full field, $U$ on the right hand side of (\ref{scatter}), with the incident field, $U^{\text{in}}$, resulting in

\begin{equation*}
    U^{\text{sc}}(t, \mathbf{x}) \approx \int G_0(t-\tau, \mathbf{x} - \mathbf{z}) V(\mathbf{z}) \partial_\tau^2 U^{\text{in}} (\tau, \mathbf{z}) d\tau d\mathbf{z} .
\end{equation*}
This is a standard, but formal approximation, and is based on the assumption $V$ is small relative to $c_0^{-2}$. The approximation appears to be very robust and works very well in practice.

This approximation removes non-linearity from the forward problem by replacing two unknowns ($U$ and $V$) with one unknown, $V$, multiplied by the known incident field, $U^{\text{in}}(t, \mathbf{x})$.

Using the expression for the incident in \eqref{U_in}, and employing the Green's function again, we obtain
\begin{equation*}
    U^{\text{sc}}(\mathbf{r}, \mathbf{z}) \approx \int \left(\frac{e^{-i\omega(t-(\vert \mathbf{x}-\mathbf{z} \vert + \vert \mathbf{x} - \mathbf{E}_1 \vert)/c_0)}}{(4\pi)^2 \vert \mathbf{x} - \mathbf{E}_1 \vert} + \frac{e^{-i\omega(t-(\vert \mathbf{x}-\mathbf{z} \vert + \vert \mathbf{x} - \mathbf{E}_2 \vert)/c_0)}}{(4\pi)^2 \vert \mathbf{x} - \mathbf{E}_2 \vert}\right)\frac{\omega^2 V(\mathbf{x})}{\vert \mathbf{x} - \mathbf{z}\vert}d\omega d\mathbf{x}.
\end{equation*}

We consider our radar data as being composed of measurements made of the scattered field at receiver locations $\mathbf{z}=\gamma(\mathbf{r})$. We let the antenna centre be on a flight track, at a height $h$ above the ground, that is paramaterised by $\{\gamma (\mathbf{r})  = (r_1, r_2, h) \text{ }\vert\text{ } r_1^\text{min} < r_1 < r_1^\text{max}, \text{ } r_2^\text{min} < r_2 < r_2^\text{max}\}$. The abrupt edges of such a data acquisition geometry can lead to artifacts in the image. With this in mind we multiply the data by a smooth taper function or mute,  $m(r, t)$, whose support is a subset of the rectangle $[r_1^\text{min}, r_1^\text{max}] \times [r_2^\text{min}, r_2^\text{min}] \times [0, T]$. Therefore, we modify our model of the scattered wavefield to the following:

\begin{equation}
    U^{\text{sc}}(\mathbf{r}, t) \approx \int \left(\frac{e^{-i\omega(t-(\vert \mathbf{x}-\gamma(\mathbf{r}) \vert + \vert \mathbf{x} - \mathbf{E}_1 \vert)/c_0)}}{(4\pi)^2 \vert \mathbf{x} - \mathbf{E}_1 \vert} + \frac{e^{-i\omega(t-(\vert \mathbf{x}-\gamma(\mathbf{r}) \vert + \vert \mathbf{x} - \mathbf{E}_2 \vert)/c_0)}}{(4\pi)^2 \vert \mathbf{x} - \mathbf{E}_2 \vert}\right)\frac{\omega^2 m(\mathbf{r},t)}{\vert \mathbf{x}-\gamma(\mathbf{r})\vert}V(\mathbf{x})d\omega d\mathbf{x}. \label{signal}
\end{equation}

We denote the map from the distribution $V$ to data by the operator $F: \mathcal{E}'(\mathbb{R}^3) \rightarrow \mathcal{D}'(\mathbb{R}^2 \times \mathbb{R})$ where $\mathcal{D}'(X), \mathcal{E}'(X)$ respectively refer to distributions on a an open set (or manifold) $X$ and distributions with compact support on $X$. The operator, $F$, is given explicitly as

\begin{equation} 
		FV(\mathbf{r},t) = \su{i=1,2}{}{\itg{}{}{e^{-i\omega\left(t-\frac{\vert \mathbf{x}-\gamma(\mathbf{r}) \vert }{c_0} - \frac{\vert \mathbf{x} - \mathbf{E}_i \vert}{c_0}\right)}A_i(\mathbf{x}, \omega, \mathbf{r})V(\mathbf{x})d\omega d\mathbf{x}}},\label{forward}
\end{equation}

where
\begin{equation}
	A_i(\mathbf{x}, \omega, \mathbf{r}) = \frac{\omega^2m(\mathbf{r},t)}{(4\pi)^2\vert \mathbf{x} -\gamma(\mathbf{r})\vert \vert \mathbf{x} -\mathbf{E}_i\vert}.\label{amp}
\end{equation}

The amplitudes $A_i(\mathbf{x}, s, t, \omega)$, satisfy the condition that for any compact set $K$
\begin{equation*}
    \sup_{(\mathbf{r}, t, \mathbf{x}) \in K} \vert \partial^\alpha_\omega \partial^\beta_s \partial^\delta _t \partial^\rho _\mathbf{x} A_i(\mathbf{x}, \mathbf{r}, t, \omega) \vert \leq C_{K, \alpha, \beta, \delta, \rho }(1 + \omega^2)^{(2-\vert\alpha\vert)/2} \ .
\end{equation*}
The condition on the amplitudes together with equation $(\ref{forward})$ shows that the forward operator $F$ is a Fourier integral operator (FIO) \cite{Duistermaat2011}. In fact each summand in the definition of $F$ is also a (local) FIO.

\subsection{Analysis of forward operator}

\begin{definition}\cite{Duistermaat2011}
    If the Fourier Integral Operator, $T : \mathcal{E}'(X) \rightarrow \mathcal{E}'(Y)$, is given by the oscillatory integral
\begin{equation*}
    Tf(\mathbf{y}) = \int e^{i\phi(\mathbf{y}, \mathbf{x}, \omega)}a(\mathbf{y}, \mathbf{x}, \omega)f(\mathbf{x})d\omega d\mathbf{x}
\end{equation*}

then its (twisted) canonical relation is the set
\begin{equation*}
    \Lambda^\prime_T = \left\{ ( (\mathbf{y}, \boldsymbol{\eta}), (\mathbf{x}, \boldsymbol{\xi})): D_\omega \phi(\mathbf{y}, \mathbf{x}, \omega) = 0, \boldsymbol{\eta} = -D_y\phi(\mathbf{y}, \mathbf{x}, \omega), \boldsymbol{\xi} = D_x\phi(\mathbf{y}, \mathbf{x}, \omega),\ (\mathbf{y},\mathbf{x},\omega)\in ess\ supp (a) \right\},
\end{equation*}
\end{definition}
where $ess\ supp\ a$ refers to the essential support of the symbol $a$; see \cite{Duistermaat2011} for more detail on the required properties of the amplitude $a$ and phase $\phi$, for example.

Our data is the sum of locally defined FIOs, $FV = \sum_{i=1,2}F_iV$, where each $F_i$ has canonical relation, $\Lambda^\prime_i$, given by
\begin{align}
    \Lambda^\prime_i = \Big\{((\mathbf{r}, t, \boldsymbol{\rho}, \tau), (\mathbf{x}, \boldsymbol{\xi})): t = \frac{1}{c_0}\left(|\mathbf{x}-\gamma(\mathbf{r})| + |\mathbf{x}-\mathbf{E}_i|\right),\notag \\ \boldsymbol{\rho} =\frac{\tau}{c_0}\widehat{(\mathbf{x}-\gamma(\mathbf{r}))}_H, \boldsymbol{\xi} = -\frac{\tau}{c_0}\left( \widehat{\left(\mathbf{x} - \gamma(\mathbf{r})\right)} + \widehat{(\mathbf{x}-\mathbf{E}_i)} \right)\Big\}\ ,\label{canonical}
\end{align}
where $\widehat{(\mathbf{x}-\gamma(\mathbf{r}))}_H$ denotes the horizontal components of $\widehat{(\mathbf{x}-\gamma(\mathbf{r}))}$. For $F_i$ to be an FIO we require that $\boldsymbol{\xi} \neq 0$, so we assume that no scatterers lie on the line segments between $\gamma(\mathbf{r})$ and each $\mathbf{E}_i$, thereby ensuring $\boldsymbol{\xi} = \left( \widehat{\left(\mathbf{x} - \gamma(\mathbf{r})\right)} + \widehat{(\mathbf{x}-\mathbf{E}_i)} \right)\neq 0$.

We may verify that $F$ is an FIO by checking the following conditions: The phase function of each $F_i$ is $\phi_i(r, t, \mathbf{x}, \omega) = \omega\left(t - \frac{|\mathbf{x}-\gamma(\mathbf{r})|}{c_0} -\frac{|\mathbf{x}-\mathbf{E}_i|}{c_0}\right)$ is clearly homogeneous of degree 1 in $\omega$ and it is also clear that it is non-degenerate \cite{Duistermaat2011}.

The order of $F$ is calculated as follows
\begin{equation*}
    \text{order}(F_i) = \text{order}(\text{amplitude}) + \frac{\text{\# phase variables}}{2} - \frac{\text{\#output + \#input variables}}{4}
\end{equation*}

\begin{equation*}
     = 2 + \frac{1}{2} - \frac{3+3}{4} = 1 \ .
\end{equation*}

So, $F$ is an FIO of order $1$ and is associated to the canonical relation $\Lambda_1^{\prime}\cup \Lambda_2^{\prime}$.

The wavefront relation, $WF^{\prime}(F)$, is the subset of points in $\Lambda_1^{\prime}\cup \Lambda_2^{\prime}$
corresponding to the essential support of the amplitude, $a$, (i.e., in the region outside of which $a$ and all its derivatives decrease faster than any negative power of $\omega$, as $\omega\rightarrow\infty$.) 

\section{Backprojection}\label{surface_section}
To construct an image of the scatterers in the scene we will use backprojection \cite{SARinversion, nolan2004microlocal, nonflat, ambartsoumian2013class, ambartsoumian2018singular, yarman2007bistatic}, which means applying the formal $L^2$-adjoint of the scattering operator to the data. However, as seen in equation \eqref{forward}, the model for the radar data is given as the sum of FIOs. For our analysis, we assume that it is not possible to separate the data into the individual terms corresponding to the signals measured from each emitter before backprojection. As such, an operator cannot be chosen that is simultaneously adjoint to both terms in the sum in \eqref{forward}.
Our approach is to apply the adjoints of $F_1$ and $F_2$ to the data separately,
\begin{align*}
     (F_1^{*}+F_2^{*})FV(\mathbf{z}) =(F_1^{*}+F_2^{*})\left(\left(F_1 + F_2\right)V(\mathbf{z})\right)  \notag \\= F_1^{*}F_1V(\mathbf{z}) + F_1^{*}F_2V(\mathbf{z}) + F_2^{*}F_1V(\mathbf{z}) + F_2^{*}F_2V(\mathbf{z}).
\end{align*}

This leaves us with two types of terms. The first is what we will refer to as diagonal terms. This is when the adjoint operator meets its corresponding part of the forward operator ($F_1^{*}F_1V(\mathbf{z})$ and $F_2^{*}F_2V(\mathbf{z})$), which is desirable in a standard backprojection algorithm. The other terms we refer to as `mixed terms'. The mixed terms are produced when the adjoint operator meets the part of the forward operator that it is not adjoint to ($F_1^{*}F_2V(\mathbf{z})$ and $F_2^{*}F_1V(\mathbf{z})$). This means that we are using \eqref{canonical} to interpret the data but with the incorrect emitter.   Therefore, when we are determining the positions of scatterers that could have resulted in singularities in the data, the mixed terms result in artifacts. We now proceed with an analysis of each type of term.

\subsection{Diagonal operator}\label{diagonal_section}

We denote by $\pi_L$ and $\pi_R$ the projections from the canonical relation $\Lambda_i$ to the two factor spaces $T^{*}\mathbb{R}^2$ on the left and right. The left projection $\pi_L: \Lambda_i\rightarrow T^{*}\mathbb{R}^2$ is defined by
\begin{equation*}
    \pi_L(\mathbf{r},t,\boldsymbol{\rho},\tau;\mathbf{x},\boldsymbol{\xi})\rightarrow(\mathbf{r},t,\boldsymbol{\rho},\tau),
\end{equation*}
and the right projection is defined by
\begin{equation*}
    \pi_R(\mathbf{r},t,\boldsymbol{\rho},\tau;\mathbf{x},\boldsymbol{\xi})\rightarrow(\mathbf{x}, \boldsymbol{\xi}).
\end{equation*}
It is a fact of microlocal analysis that, for a general canonical relation, the singularities
of the projections $\pi_L$ and $\pi_R$ have important implications when studying inverse problems and the relevant linearized forward maps. The singularities \cite{Golubitsky1973} of $\pi_L$ and $\pi_R$ determine whether reconstruction via backprojection is possible, and allows the characterization of artifacts \cite{ambartsoumian2013class, ambartsoumian2018singular, Felea01112005, SARinversion, cusp, EricToddQuinto2013InverseProblemsandImaging}. For any canonical relation, say $C_0 \subset T^{*}\mathbb{R}^n \times T^{*}\mathbb{R}^n
$ associated with an FIO, $F_0$, if
one of the two maps $\pi_L^{\prime}$ and $\pi_R^{\prime}$ is nonsingular at a point $\lambda \in C_0$, then so is the other. As such, we can state a nonsingularity condition for $\pi_L^{\prime}$ and $\pi_R^{\prime}$ in either of the following ways,
\begin{equation}
    det(\pi_L^{\prime})(\lambda) \neq 0 \text{ if and only if } det(\pi_L^{\prime})(\lambda) \neq 0.\label{determinants}
\end{equation}
If we then have that the determinants in \eqref{determinants} is nonzero in a neighbourhood of $\lambda_0$, then $C_0$ is said to be a local canonical graph near
$\lambda_0 = (y_0, \eta_0, x_0, \xi_0)$ in the sense that $C_0$ is the graph of a canonical transformation $\chi:T^{*}\mathbb{R}^n \rightarrow T^{*}\mathbb{R}^n $ defined near $(x_0, \xi_0)$\cite{hormander3}. If $C_0$ is a local canonical graph, the formation of the composition $F^{*}_0F_0$ is covered by the transverse intersection calculus for FIOs \cite{Duistermaat2011, hormander2009analysis}. In this case, $F_0^{*}F_0 \in I^{2m}(\mathcal{D})$ where $\mathcal{D}$ is a canonical relation containing a part of the diagonal relation $\triangle = \{(\mathbf{x}, \boldsymbol{\xi}), (\mathbf{x}, \boldsymbol{\xi})\}$. If $\mathcal{D}$ contains only points of $\triangle$ then $F_0^{*}F_0$ is a pseudodifferential operator \cite{1991Eitt}. Pseudodifferential operators satisfy the pseudo-local property, which tells us that when such operators are applied to a distribution, no singularities will be introduced where the distribution was originally smooth. In imaging contexts, this means that a pseudodifferential operator will not produce artifacts in the image and will yield a faithful reconstruction of the scatterers that are visible in the data. 

If, $\mathcal{D}$ contains points not in $\triangle$ then there will be artifacts in the image. The B\"olker condition, when satisfied, guarantees that this is not the case and subsequently that there will not be artifacts in the image. The B\"olker condition, in a radar imaging context such as the one considered in this article reduces to:
\begin{enumerate}
    \item The projections $\pi_L$ and $\pi_R$ are nonsingular everywhere.
    \item $\pi_L$ is injective.
\end{enumerate}

Operators similar to the diagonal operators here have been studied previously \cite{SARinversion, symes.98} and the associated canonical relations $\Lambda_i$ have been shown to satisfy the B\"olker condition. This means that the diagonal operators $F_i^{*}F_i, i=1,2$, resultant from our backprojection algorithm will not produce artifacts in the image.

\subsection{Mixed operator} \label{mixed}
We now analyse the mixed operator, $F_1^{*}F_2$, which is associated with the contribution to the image as a result of crosstalk between the emitters.

\begin{theorem}\label{composition_theorem}
    \cite{Duistermaat2011} The composition of the FIOs, $F_1$ and $F_2$, is itself an FIO if the following conditions hold:
    \begin{flalign}
        &\bullet \quad \text{Either } \boldsymbol{\xi} \neq 0 \text{ or } \boldsymbol{\zeta} \neq 0 \text{ if } (\mathbf{r}, t, \boldsymbol{\rho}, \tau; \mathbf{x}, \boldsymbol{\xi}) \in \Lambda^{\prime}_1 \text{and } (\mathbf{z}, \boldsymbol{\zeta}; \mathbf{r}, t, \boldsymbol{\rho}, \tau) \in \Lambda^{\prime}_2.\label{composition_condition_1} \\
         &\bullet \quad (\tau, \boldsymbol{\rho}) \neq 0 \text{ if } (\mathbf{r}, t, \boldsymbol{\rho}, \tau; \mathbf{x}, \boldsymbol{\xi}) \in \Lambda^{\prime}_1 \text{ or } (\mathbf{z}, \boldsymbol{\zeta}; \mathbf{r}, t, \boldsymbol{\rho}, \tau) \in \Lambda^{\prime}_2.\label{composition_condition_2}\\
         &\bullet \quad \Lambda_1^{\prime} \times \Lambda_2^{\prime} \text{ intersects } \text{T}^{*}X \times \triangle_{\text{T}^{*}Y}\times\text{T}^{*}Z \text{ transversally. }\label{composition_condition_3}\\
         &\bullet \quad \text{ The projection from } \pi_{X\times Y}(\text{supp }a_1) \times \pi_{Y\times Z}(\text{supp }a_2) \cap X \times \triangle_Y \times Z \text{ into } X\times Z \text{ is a proper mapping. }\label{composition_condition_4}
    \end{flalign}
\end{theorem}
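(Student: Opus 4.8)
Since this is the classical composition theorem for Fourier integral operators, my plan is to reduce it to the machinery of oscillatory integrals and non-degenerate phase functions, following the transverse-intersection calculus. First I would represent $F_1$ and $F_2$ by their oscillatory-integral kernels $K_1$ and $K_2$, with phases $\phi_1,\phi_2$ and amplitudes $a_1,a_2$, so that the Schwartz kernel of the composition is the fibre integral $K(\mathbf{x},\mathbf{z}) = \int K_1(\mathbf{x},\mathbf{y})K_2(\mathbf{y},\mathbf{z})\,d\mathbf{y}$ over the intermediate space $Y$. Substituting the integral representations produces a single oscillatory integral whose phase is the sum $\Phi = \phi_1 + \phi_2$ and whose phase variables are the frequency variable $\theta$ of $\phi_1$ and $\sigma$ of $\phi_2$, together with the intermediate space variable $\mathbf{y}$, now promoted to a phase variable.

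The crux of the argument is to show that $\Phi$ is a non-degenerate phase function and that the Lagrangian it parametrizes is exactly the composed relation $\Lambda_1^{\prime} \circ \Lambda_2^{\prime}$. Here condition \eqref{composition_condition_3} — transversality of $\Lambda_1^{\prime} \times \Lambda_2^{\prime}$ with $\text{T}^{*}X \times \triangle_{\text{T}^{*}Y} \times \text{T}^{*}Z$ — is what does the work: I would compute the critical set $\{d_{(\theta,\sigma,\mathbf{y})}\Phi = 0\}$ and show, using the transversal intersection, that the relevant differentials are independent (so $\Phi$ is non-degenerate, i.e. clean with zero excess) and that the critical set maps diffeomorphically onto a smooth Lagrangian submanifold of $\text{T}^{*}X \times \text{T}^{*}Z$ coinciding with $\Lambda_1^{\prime} \circ \Lambda_2^{\prime}$. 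Transversality guarantees this composed set is an embedded canonical relation of the expected dimension rather than a singular set, which is precisely the obstruction one must rule out.

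Next I would check the non-vanishing conditions. Condition \eqref{composition_condition_2}, $(\tau,\boldsymbol{\rho}) \neq 0$ on the intermediate cotangent space, keeps the fibre variables away from the zero section of $\text{T}^{*}Y$; this ensures the product of the two kernels is microlocally well defined (the wavefront of one factor never meets the zero direction required by the other) and that $\Phi$ remains a genuine phase with non-vanishing fibre gradient. Condition \eqref{composition_condition_1}, $\boldsymbol{\xi} \neq 0$ or $\boldsymbol{\zeta} \neq 0$, guarantees that $\Lambda_1^{\prime} \circ \Lambda_2^{\prime}$ avoids the zero sections of $\text{T}^{*}X$ and $\text{T}^{*}Z$, so that the composition is associated to a bona fide canonical relation rather than degenerating into a smoothing term. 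Finally, condition \eqref{composition_condition_4} — properness of the projection into $X\times Z$ — ensures the fibre integral over $\mathbf{y}$ is locally finite and that the composition defines a continuous operator $\mathcal{E}^{\prime} \to \mathcal{D}^{\prime}$; without it the support of the composed kernel could fail to be proper and the integral need not converge.

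With these pieces in place, the conclusion follows from the standard oscillatory-integral and equivalence-of-phase-functions theorems: the composition has a kernel given by a non-degenerate oscillatory integral associated to $\Lambda_1^{\prime} \circ \Lambda_2^{\prime}$, hence is an FIO, and, since the intersection is transversal with zero excess, its order is the sum of the orders of $F_1$ and $F_2$. I expect the main obstacle to be the second step — verifying non-degeneracy of the combined phase and identifying its critical Lagrangian with $\Lambda_1^{\prime} \circ \Lambda_2^{\prime}$ — since this is where the geometry of condition \eqref{composition_condition_3} must be translated into an analytic rank condition on the Hessian of $\Phi$; the non-vanishing and properness conditions, by contrast, amount to comparatively routine bookkeeping about supports and frequency variables.
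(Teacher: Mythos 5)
Your proposal is correct and coincides with the paper's treatment: the paper offers no internal proof of this statement but cites it directly from \cite{Duistermaat2011}, and the classical transverse-intersection argument given there is precisely what you outline (sum the phases, promote the intermediate variable $\mathbf{y}$ to a phase variable, use transversality \eqref{composition_condition_3} to get a non-degenerate phase parametrizing $\Lambda_1^{\prime}\circ\Lambda_2^{\prime}$ with zero excess, hence additivity of orders). Your reading of the remaining hypotheses also matches the standard one --- \eqref{composition_condition_1} and \eqref{composition_condition_2} keep the composed relation and the intermediate covector away from the zero sections, and \eqref{composition_condition_4} makes the fibre integral over $\mathbf{y}$ locally well defined --- so there is nothing to add beyond the routine verification, which the paper itself leaves as an exercise.
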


We leave as an exercise to the reader to verify that theorem \ref{composition_theorem} applies to the operators $F_1^{*}$ and $F_2$ and that the composition, $F_1^{*}F_2$ is itself, an FIO.

\begin{theorem}
    The mixed term operator, $F_1^{*}F_2$, is an FIO of order $2$ with canonical relation $\mathcal{C}$ given by
    \begin{equation}
        \mathcal{C} = \left\{( (\mathbf{z}, \boldsymbol{\zeta}), (\mathbf{x},\boldsymbol{\xi})): \mathbf{z} = c(\mathbf{x}-\gamma(\mathbf{r}))+\gamma(\mathbf{r}), \boldsymbol{\zeta} = \boldsymbol{\xi} -\tau\hat{\boldsymbol{\nu}}+\tau\widehat{(\mathbf{x}-\mathbf{E}_2)}\right\},
    \end{equation}
\end{theorem}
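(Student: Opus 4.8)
The plan is to realise $\mathcal{C}$ as the composition of canonical relations $\mathcal{C} = (\Lambda_1^{\prime})^{t}\circ \Lambda_2^{\prime}$, where $(\Lambda_1^{\prime})^{t}$ is the transpose relation carrying $F_1^{*}$, and to read off the order from additivity under composition. Since $F_1$ and $F_2$ are FIOs of order $1$ and passing to the adjoint preserves order, the transverse-intersection calculus invoked through Theorem \ref{composition_theorem} gives $\mathrm{order}(F_1^{*}F_2) = 1+1 = 2$ directly (the excess is zero under the transversality hypothesis \eqref{composition_condition_3}). The real content is the explicit form of $\mathcal{C}$, which I would extract by matching the intermediate cotangent variables $(\mathbf{r},t,\boldsymbol{\rho},\tau)$ that the two factors share.

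Writing $((\mathbf{z},\boldsymbol{\zeta}),(\mathbf{r},t,\boldsymbol{\rho},\tau))\in(\Lambda_1^{\prime})^{t}$ means $((\mathbf{r},t,\boldsymbol{\rho},\tau),(\mathbf{z},\boldsymbol{\zeta}))\in\Lambda_1^{\prime}$, and pairing this with $((\mathbf{r},t,\boldsymbol{\rho},\tau),(\mathbf{x},\boldsymbol{\xi}))\in\Lambda_2^{\prime}$ via the explicit description \eqref{canonical} produces three families of matching equations. The base-point equation in $t$ gives the travel-time identity
\begin{equation*}
    |\mathbf{z}-\gamma(\mathbf{r})| + |\mathbf{z}-\mathbf{E}_1| = |\mathbf{x}-\gamma(\mathbf{r})| + |\mathbf{x}-\mathbf{E}_2|,
\end{equation*}
while equality of the $\boldsymbol{\rho}$-components (which survives because $\tau\neq 0$ by \eqref{composition_condition_2}) forces $\widehat{(\mathbf{z}-\gamma(\mathbf{r}))}_H = \widehat{(\mathbf{x}-\gamma(\mathbf{r}))}_H$; that is, the horizontal look-directions from the receiver to $\mathbf{z}$ and to $\mathbf{x}$ coincide.

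The geometric heart of the argument is to upgrade this equality of the \emph{horizontal} unit-vector components to full collinearity of $\mathbf{z}-\gamma(\mathbf{r})$ and $\mathbf{x}-\gamma(\mathbf{r})$. Squaring and summing the two horizontal components shows the vertical components of the two unit vectors agree up to sign, and the sign ambiguity is discharged by the standing assumption that the scene lies on one side of (below) the flight track, so both vectors point downward. This yields $\widehat{(\mathbf{z}-\gamma(\mathbf{r}))} = \widehat{(\mathbf{x}-\gamma(\mathbf{r}))}$, hence $\mathbf{z}-\gamma(\mathbf{r}) = c\,(\mathbf{x}-\gamma(\mathbf{r}))$ for a positive scalar $c$, i.e. $\mathbf{z} = c(\mathbf{x}-\gamma(\mathbf{r}))+\gamma(\mathbf{r})$, with $c$ then pinned down by the travel-time identity above (thereby realising the artifact surface parameterised by $\mathbf{r}$). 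Substituting the common unit vector into the two fibre-covector expressions from \eqref{canonical}, the shared $\widehat{(\mathbf{x}-\gamma(\mathbf{r}))}$ term cancels in $\boldsymbol{\zeta}-\boldsymbol{\xi}$, leaving
\begin{equation*}
    \boldsymbol{\zeta} = \boldsymbol{\xi} - \tfrac{\tau}{c_0}\widehat{(\mathbf{z}-\mathbf{E}_1)} + \tfrac{\tau}{c_0}\widehat{(\mathbf{x}-\mathbf{E}_2)},
\end{equation*}
which is exactly the stated relation once one identifies $\hat{\boldsymbol{\nu}}$ with the look-direction $\widehat{(\mathbf{z}-\mathbf{E}_1)}$ from $\mathbf{E}_1$ to the artifact point and absorbs $c_0$ into the normalisation of $\tau$.

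I expect the main obstacle to be precisely this collinearity step: making rigorous that $\boldsymbol{\rho}$ transmits only the horizontal look-direction (the flight track carries a purely horizontal aperture) yet this still forces the two three-dimensional look-directions to coincide, with the vertical sign correctly resolved by the scene-below-track hypothesis. A secondary, more routine point is to confirm that the intermediate variables can be eliminated consistently — that the transversality and properness hypotheses \eqref{composition_condition_3}--\eqref{composition_condition_4} genuinely hold for $F_1^{*}$ and $F_2$ here — so that $\mathcal{C}$ is an immersed Lagrangian canonical relation and the order count of $2$ is valid.
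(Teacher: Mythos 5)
Your proposal is correct and takes essentially the same route as the paper: composing $\Lambda_1^{\prime\,t}\circ\Lambda_2^{\prime}$, equating the $\boldsymbol{\rho}$-components to force collinearity of $\mathbf{z}-\gamma(\mathbf{r})$ and $\mathbf{x}-\gamma(\mathbf{r})$ (with the vertical sign resolved exactly as you say, by the scene lying below the flight track), equating $t$ to determine $c$, substituting into the covector equations, and adding the orders under the transverse-intersection calculus. The only substantive detail the paper carries further is solving the travel-time identity explicitly for $c$ by squaring, and proving via the triangle inequality that the resulting denominator never vanishes (so $c$ is well defined and positive) — a step you gesture at with ``pinned down by the travel-time identity'' but do not execute; your identification $\hat{\boldsymbol{\nu}}=\widehat{(\mathbf{z}-\mathbf{E}_1)}$ agrees with the paper's intended definition of $\boldsymbol{\nu}$.
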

where $\boldsymbol{\nu} = c(\mathbf{x}-\gamma(\mathbf{r})+\gamma(\mathbf{r})-\mathbf{E}_1) $.
\begin{proof}
    
$F_1^{*}F_2$ is associated to the composition of canonical relations $\Lambda_1^{\prime \text{ } t } \circ \Lambda_2^{\prime}$, where $ \Lambda_1^{\prime \text{ } t}$ and $\Lambda_2^{\prime}$ are as follows:

\begin{align}
    \Lambda_1^{\prime \text{ } t}= \Big\{((\mathbf{z}, \boldsymbol{\zeta}), (\mathbf{r}, t, \boldsymbol{\rho}, \tau)): t = \frac{1}{c_0}\left(|\mathbf{z}-\gamma(\mathbf{r})| + |\mathbf{z}-\mathbf{E}_1|\right), \notag\\\boldsymbol{\zeta} = -\frac{\tau}{c_0}\left( \widehat{\left(\mathbf{z} - \gamma(\mathbf{r})\right)} + \widehat{(\mathbf{z}-\mathbf{E}_1)} \right), \boldsymbol{\rho} = \frac{\tau}{c_0}\widehat{(\mathbf{z}-\gamma(\mathbf{r}))}_H\Big\},\label{canonical_1}
\end{align}

\begin{align}
    \Lambda_2^{\prime} = \Big\{((\mathbf{r}, t, \boldsymbol{\rho}, \tau), (\mathbf{x}, \boldsymbol{\xi})): t = \frac{1}{c_0}\left(|\mathbf{x}-\gamma(\mathbf{r})| + |\mathbf{x}-\mathbf{E}_2|\right),\notag\\\boldsymbol{\rho} =\frac{\tau}{c_0}\widehat{(\mathbf{x}-\gamma(\mathbf{r}))}_H, \boldsymbol{\xi} = -\frac{\tau}{c_0}\left( \widehat{\left(\mathbf{x} - \gamma(\mathbf{r})\right)} + \widehat{(\mathbf{x}-\mathbf{E}_2)} \right)\Big\}.\label{canonical_2}
\end{align}

Equating $\boldsymbol{\rho}$ in $\Lambda_1^{\prime \text{ } t}$ and $\Lambda_2^{\prime}$ above,
\begin{align}
    \widehat{\left(\mathbf{z}-\gamma(\mathbf{r}) \right)}_H = \widehat{\left(\mathbf{x}-\gamma(\mathbf{r}) \right)}_H,\label{p_equivalence}
\end{align}
\begin{align}
   \implies \left(\mathbf{z}-\gamma(\mathbf{r}) \right)_H = c\left(\mathbf{x}-\gamma(\mathbf{r}) \right)_H,\label{horizontal_components}
\end{align}
Here we note that, because the scatterer is not directly below $\gamma(\mathbf{r})$, it follows that $\left(\mathbf{x}-\gamma(\mathbf{r}) \right)_H \neq 0$ and $c\neq 0$ is a constant that is to be determined. Therefore,
\begin{equation} \label{z_formula}
    \mathbf{z} =  c(\mathbf{x}-\gamma(\mathbf{r})) + \gamma(\mathbf{r}) \ ,
\end{equation}
where we may assume that $c>0$ as the scatterer is assumed to lie at a height which is below the height of the receivers.

We now proceed by equating $t$ in the canonical relations \eqref{canonical_1} and \eqref{canonical_2} to obtain the condition,
\begin{equation}
    |\mathbf{z}-\gamma(\mathbf{r})| + |\mathbf{z}-\mathbf{E}_1|=|\mathbf{x}-\gamma(\mathbf{r})| + |\mathbf{x}-\mathbf{E}_2|,\label{c_cond1}
\end{equation}
which we will use determine the value of the constant, $c$.
Using \eqref{z_formula} here, and remembering that $c>0$, we have
\begin{equation*}
     c|\mathbf{x}-\gamma(\mathbf{r})| + | c(\mathbf{x}-\gamma(\mathbf{r})) + \gamma(\mathbf{r})-\mathbf{E}_1|=|\mathbf{x}-\gamma(\mathbf{r})| + |\mathbf{x}-\mathbf{E}_2|.
\end{equation*}

Grouping similar terms,

\begin{equation*}
    |c(\mathbf{x}-\gamma(\mathbf{r})) + \gamma(\mathbf{r})-\mathbf{E}_1|= (1-c)|\mathbf{x}-\gamma(\mathbf{r})| + |\mathbf{x}-\mathbf{E}_2|,
\end{equation*}
and squaring both sides,
\begin{flalign*}
    \ &\left(c(x_1-r_1) + (r_1-\alpha_1)\right)^2 + \left(c(x_2-r_2) + (r_2-\alpha_2)\right)^2 + \\ &\left(c(x_3-h) + (h-\alpha_3)\right)^2  =\left(|\mathbf{x}-\gamma(\mathbf{r})|+|\mathbf{x}-\mathbf{E}_2|-c|\mathbf{x}-\gamma(\mathbf{r})|\right)^2.
\end{flalign*}
Expanding this results in
\begin{flalign*}
    \ &c^2\big( (x_1-r_1)^2 + (x_2 -r_2)^2 +(x_3-h)^2\big) \\&+ 2c\big((x_1-r_1)(r_1-\alpha_1)+(x_2-r_2)(r_2-\alpha_2) +(x_3-h)(h-\alpha_3)\big)  \\ &+(r_1-\alpha_1)^2+(r_2-\alpha_2)^2+(h-\alpha_3)^2 \\&=  (|\mathbf{x}-\gamma(\mathbf{r})| + |\mathbf{x}-\mathbf{E}_2| )^2 - 2c|\mathbf{x}-\gamma(\mathbf{r})|(|\mathbf{x}-\gamma(\mathbf{r})| + |\mathbf{x}-\mathbf{E}_2| )+c^2|\mathbf{x}-\gamma(\mathbf{r})|^2.
\end{flalign*}
Rearranging and simplifying this expression gives
\begin{flalign*}
    \ &c^2|\mathbf{x}-\gamma(\mathbf{r})|^2 + 2c\big((\mathbf{x}-\gamma(\mathbf{r}))\cdot(\gamma(\mathbf{r})-\mathbf{E}_1)\big) + |\gamma(\mathbf{r}) - \mathbf{E}_1|^2   \\ &=\left(|\mathbf{x}-\gamma(\mathbf{r})| + |\mathbf{x}-\mathbf{E}_2| \right)^2 - 2c|\mathbf{x}-\gamma(\mathbf{r})|\left(|\mathbf{x}-\gamma(\mathbf{r})| + |\mathbf{x}-\mathbf{E}_2| \right)+c^2|\mathbf{x}-\gamma(\mathbf{r})|^2.
\end{flalign*}
Cancellation occurs, leaving us with
\begin{flalign*}
     2c\big((\mathbf{x}-\gamma&(\mathbf{r}))\cdot(\gamma(\mathbf{r})-\mathbf{E}_1)+|\mathbf{x}-\gamma(\mathbf{r})|\left(|\mathbf{x}-\gamma(\mathbf{r})| + |\mathbf{x}-\mathbf{E}_2| \right)\big) \\ &=\left(|\mathbf{x}-\gamma(\mathbf{r})| + |\mathbf{x}-\mathbf{E}_2| \right)^2 -|\gamma(\mathbf{r}) - \mathbf{E}_1|^2.
\end{flalign*}
Therefore,
\begin{align}
     c = \frac{ \left(|\mathbf{x}-\gamma(\mathbf{r})| + |\mathbf{x}-\mathbf{E}_2| \right)^2 -|\gamma(\mathbf{r}) - \mathbf{E}_1|^2}{2\big((\mathbf{x}-\gamma(\mathbf{r}))\cdot(\gamma(\mathbf{r})-\mathbf{E}_1)+|\mathbf{x}-\gamma(\mathbf{r})|\left(|\mathbf{x}-\gamma(\mathbf{r})| + |\mathbf{x}-\mathbf{E}_2| \right)\big) }.\label{c}
\end{align}
To demonstrate that the denominator in (\ref{c}) is always nonzero we consider the terms,

\begin{equation*}
    (\mathbf{x}-\gamma(\mathbf{r}))\cdot(\gamma(\mathbf{r})-\mathbf{E}_1)+|\mathbf{x}-\gamma(\mathbf{r})|\left(|\mathbf{x}-\gamma(\mathbf{r})| + |\mathbf{x}-\mathbf{E}_2| \right),
\end{equation*}
and rewrite this as,
\begin{equation*}
    |\mathbf{x}-\gamma(\mathbf{r})||\gamma(\mathbf{r})-\mathbf{E}_1|\cos\lambda+|\mathbf{x}-\gamma(\mathbf{r})|\left(|\mathbf{x}-\gamma(\mathbf{r})| + |\mathbf{x}-\mathbf{E}_2| \right),
\end{equation*}
where $\lambda$ denotes the angle formed between the vectors $(\mathbf{x}-\gamma(\mathbf{r}))$ and $(\gamma(\mathbf{r})-\mathbf{E}_1)$. The only case needing attention is $\cos\lambda<0$. However,
\begin{equation*}
    \big\vert |\mathbf{x}-\gamma(\mathbf{r})||\gamma(\mathbf{r})-\mathbf{E}_1|\cos\lambda \big\vert \leq |\mathbf{x}-\gamma(\mathbf{r})||\gamma(\mathbf{r})-\mathbf{E}_1|.
\end{equation*}
We can write the right hand side of the above as follows,
\begin{equation*}
    |\mathbf{x}-\gamma(\mathbf{r})||\gamma(\mathbf{r})-\mathbf{E}_1| = |\mathbf{x}-\gamma(\mathbf{r})|| (\gamma(\mathbf{r})-\mathbf{x})+(\mathbf{x}-\mathbf{E}_1)| \ .
\end{equation*}
By the triangle inequality
\begin{equation*}
    |\mathbf{x}-\gamma(\mathbf{r})|| (\gamma(\mathbf{r})-\mathbf{x})+(\mathbf{x}-\mathbf{E}_1)|\leq |\mathbf{x}-\gamma(\mathbf{r})|\left(|\mathbf{x}-\gamma(\mathbf{r})|+|\mathbf{x}-\mathbf{E}_1|\right),
\end{equation*}
where the case of an equality holds only when $\mathbf{x}$ is on the line segment connecting $\gamma(\mathbf{r})$ and $\mathbf{E}_1$ which has been excluded already from consideration. We then conclude that
\begin{equation*}
    \big\vert (\mathbf{x}-\gamma(\mathbf{r}))\cdot(\gamma(\mathbf{r})-\mathbf{E}_1)\big\vert < \big\vert|\mathbf{x}-\gamma(\mathbf{r})|\left(|\mathbf{x}-\gamma(\mathbf{r})| + |\mathbf{x}-\mathbf{E}_2| \right) \big\vert
\end{equation*}
and that the denominator of $c$ never vanishes.

So, the position of an artifact, $\mathbf{z}$, produced by the mixed operator as a result of a scatterer at position $\mathbf{x}$, is given by equation \eqref{z_formula} where $c$ is as in \eqref{c}.

Next, we turn our attention to determining $\boldsymbol{\zeta}$. From \eqref{canonical_1} we have,

\begin{equation*}
    \boldsymbol{\zeta} = -\tau\widehat{(\mathbf{z}-\gamma(\mathbf{r}))} +\tau\widehat{(\mathbf{z}-\mathbf{E}_1)}.
\end{equation*}

We know from \eqref{z_formula} that $\widehat{(\mathbf{z}-\gamma(\mathbf{r}))} = \widehat{(\mathbf{x}-\gamma(\mathbf{r}))}$. Using this and defining the variable $\boldsymbol{\nu} = c(\mathbf{x}-\gamma(\mathbf{r})+\gamma(\mathbf{r})-\mathbf{E}_1)$ gives

\begin{equation*}
     \boldsymbol{\zeta} = -\tau\widehat{(\mathbf{x}-\gamma(\mathbf{r}))} -\tau \hat{\boldsymbol{\nu}} \ .
\end{equation*}

Recalling the value of $\boldsymbol{\xi}$ from \eqref{canonical_2} allows us to write this as

\begin{equation*}
    \boldsymbol{\zeta} = \boldsymbol{\xi} -\tau\hat{\boldsymbol{\nu}} +\tau\widehat{(\mathbf{x}-\mathbf{E}_2)} \ .
\end{equation*}

The order of $F_1^{*}F_2$ is simply the sum of the orders of $F_1^{*}$ and $F_2$, each of which are of order $1$. We then conclude that $F_1^{*}F_2$ is an FIO of order $2$ associated to the artifact relation, $\mathcal{C}$, given by

\begin{equation}
    \mathcal{C} = \left\{( (\mathbf{z}, \boldsymbol{\zeta}), (\mathbf{x},\boldsymbol{\xi})): \mathbf{z} = c(\mathbf{x}-\gamma(\mathbf{r}))+\gamma(\mathbf{r}),  \boldsymbol{\zeta} = \boldsymbol{\xi} -\tau\hat{\boldsymbol{\nu}}+\tau\widehat{(\mathbf{x}-\mathbf{E}_2)}\right\}\label{artifact_relation}\ .
\end{equation}
\end{proof}

\subsubsection{Geometrical description of the propagation of singularities}\label{description}
Figure \ref{fig:fig1} displays an example of how the operator $F_1^{*}F_2$ propagates singularities in the distributions upon which it acts. Here, for the sake of illustration, the receiver, emitters and scatterer are co-planar such that a two-dimensional representation of this phenomenon is sufficient. 

The true scatterer, located at $\mathbf{x}$, (shown as a green mark in figure \ref{fig:fig1}) and its associated artifact, located at $\mathbf{z}$, (shown in purple in figure \ref{fig:fig1}), satisfy equations \eqref{c_cond1} and \eqref{horizontal_components}, placing $\mathbf{x}$ and $\mathbf{z}$ on a pair of ellipsoidal surfaces with foci at $\gamma(\mathbf{r})$ and $\mathbf{E}_2$ and $\gamma(\mathbf{r})$ and $\mathbf{E}_1$ respectively. These ellipsoids share the same sum of distances from points on their surfaces to their foci. 
Equation \eqref{p_equivalence} places both $\mathbf{x}$ and $\mathbf{z}$ on the ray connecting $\gamma(\mathbf{r})$ and $\mathbf{x}$. The artifact relation can then be thought of as a scaling along this ray from its intersection with the first ellipsoidal surface (where the true scatterer is located) to the ray's intersection with the other, temporally equivalent, ellipsoidal surface, which is where the artifact, $\mathbf{z}$, will appear.

\begin{figure}
  \centering
  \includegraphics[width=16cm]{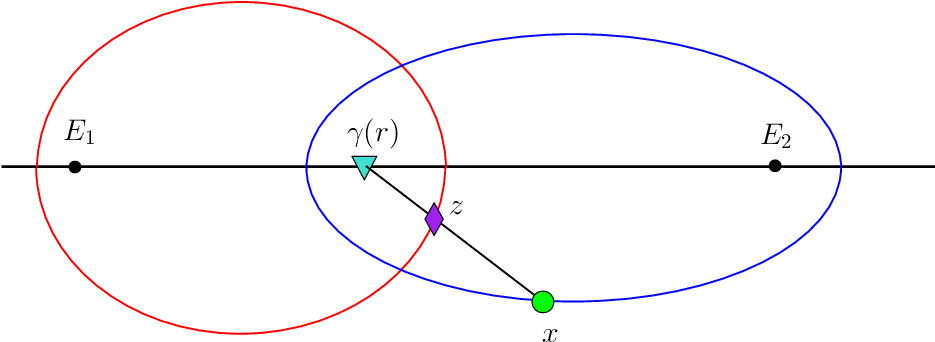}
  \caption{Propagation of singularities as a result of the artifact relation, $\mathcal{C}$, associated with the operator $F_1^{*}F_2$.}
  \label{fig:fig1}
\end{figure}

\section{Mitigation of crosstalk artifacts}\label{mitigation}
In this section we will describe two methods that can be used to avoid the harmful impact of the artifacts that occur as a result of crosstalk between emitters. We first describe how a well selected data acquisition geometry can result in a region of interest that is free from artifacts. Following this, we adapt the method described in \cite{Felea_2007} to generate a series of operators that displace artifacts away from the region we wish to image. We also discuss the requirements on the experimental design for such a technique to be possible. 

\subsection{Data acquisition geometry approach}\label{data_acquisition approach}
Using the insights gained in section \ref{mixed} regarding the locations of artifacts caused by crosstalk, we aim to arrange that the artifacts are outside of a slab shaped ROI between heights $x_3 = 0$ and $x_3 = H$. The minimum height of this slab is arbitrary and can be chosen according to specific applications. For our discussion, we will consider the bottom of the slab to be at ground level, as in many scenarios, this will be desirable when imaging a region above the earth's surface.

The position of the crosstalk artifact, $\mathbf{z}$, as a result of a scatterer located at $\mathbf{x}$, depends also on the location of the receiver. By letting the artifact location, $\mathbf{z}$ be parameterised by $r = (r_1, r_2)$, an "artifact surface" is generated, which is made up of all the individual positions of artifacts associated to receiver locations on the flight track. We examine the region in space that this surface occupies and determine data acquisition geometries such that $\mathbf{z}(\mathbf{r})$ is not contained inside a ROI for all $r$ on the flight track surface. 

We note that this method can come with the potential drawback of needing to omit certain parts of the recorded radar data in order to mitigate crosstalk artifacts. This means that views of the scene from a smaller number of different directions are used, which can result in an image which is lower resolution but is also artifact-free.

\subsubsection{Artifact surface below ground}
Here, we demonstrate how an experimental design can be chosen that places all crosstalk artifacts underground and away from any ROI above ground that is being imaged. The artifact surface is given by
\begin{equation*}
    \mathbf{z}(\mathbf{r}) = c(\mathbf{x}-\gamma(\mathbf{r}))+\gamma(\mathbf{r}).
\end{equation*}

It's vertical component is then
\begin{equation*}
    z_3(\mathbf{r}) = c(x_3-h)+h.
\end{equation*}

 For the artifact surface to be underground we impose $z_3(\mathbf{r}) < 0$, which gives
\begin{equation*}
    c > \frac{h}{h-x_3}.
\end{equation*}

We recall the value of $c$ from \eqref{c} and define the function $\Gamma(\mathbf{x}, r) = c(h-x_3)$. Explicitly, $\Gamma(\mathbf{x}, r)$ is given by

\begin{equation}
     \Gamma(\mathbf{x}, \mathbf{r}) = \frac{ (h-x_3) \left(\left(|\mathbf{x}-\gamma(\mathbf{r})| + |\mathbf{x}-\mathbf{E}_2| \right)^2 -|\gamma(\mathbf{r}) - \mathbf{E}_1|^2\right)}{2\left( |\mathbf{x}-\gamma(\mathbf{r})|\left(|\mathbf{x}-\gamma(\mathbf{r})| + |\mathbf{x}-\mathbf{E}_2| \right)\right)+(\mathbf{x}-\gamma(\mathbf{r}))\cdot(\gamma(\mathbf{r})-\mathbf{E}_1) }.\label{gamma}
\end{equation}

This means that for artifacts to be below the slab we require
\begin{align}
    \Gamma(\mathbf{x},\mathbf{r}) > h.\label{underground_condtion}
\end{align}

If \eqref{underground_condtion} holds for all locations of the receiver, $\gamma(\mathbf{r})$, in the data acquisition geometry, and scatterers, $\mathbf{x}$, in the ROI we will have that the resulting `crosstalk' artifacts will be underground and away from the ROI.

A coarser, but simpler and potentially more practical way to satisfy \eqref{underground_condtion} is described as follows. We denote by $\tilde{\Gamma}(\mathbf{x},r)$ an approximated version of \eqref{gamma}, that still satisfies \eqref{underground_condtion} whenever $\Gamma(\mathbf{x},r)$ does. We construct $\tilde{\Gamma}(\mathbf{x},r)$ by first adding the positive term $|\mathbf{x}-\mathbf{E}_2|^2$ in its denominator, meaning $\Gamma(\mathbf{x},r) > \tilde{\Gamma}(\mathbf{x},r)$. With this we have,

\begin{align*}
   \tilde{\Gamma}(\mathbf{x},\mathbf{r}) =   \frac{ (h-x_3) \left(\left(|\mathbf{x}-\gamma(\mathbf{r})| + |\mathbf{x}-\mathbf{E}_2| \right)^2 -|\gamma(\mathbf{r}) - \mathbf{E}_1|^2\right)}{2\left(|\mathbf{x}-\gamma(\mathbf{r})|^2 + 2|\mathbf{x}-\gamma(\mathbf{r})||\mathbf{x}-\mathbf{E}_2| + |\mathbf{x}-\mathbf{E}_2|^2 + (\mathbf{x}-\gamma(\mathbf{r}))\cdot(\gamma(\mathbf{r})-\mathbf{E}_1)\right) } \ .
\end{align*}

Completing the square in the denominator,

\begin{align*}
   \tilde{\Gamma}(\mathbf{x},\mathbf{r}) = \frac{ (h-x_3) \left(\left(|\mathbf{x}-\gamma(\mathbf{r})| + |\mathbf{x}-\mathbf{E}_2| \right)^2 -|\gamma(\mathbf{r}) - \mathbf{E}_1|^2\right)}{2\left((\left( |\mathbf{x}-\gamma(\mathbf{r})| + |\mathbf{x}-\mathbf{E}_2|\right)^2 + (\mathbf{x}-\gamma(\mathbf{r}))\cdot(\gamma(\mathbf{r})-\mathbf{E}_1)  \right) } \ .
\end{align*}

We recall that the two-way travel time, $c_0 t$, satisfies $c_0 t = |\mathbf{x}-\gamma(\mathbf{r})| + |\mathbf{x}-\mathbf{E}_2| = |\mathbf{x}-\gamma(\mathbf{r})| + |\mathbf{x}-\mathbf{E}_1|$. Using this we express $\tilde{\Gamma}(\mathbf{x},r)$ in terms of $c_o t$,

\begin{equation*}
   \tilde{\Gamma}(\mathbf{x},r) = \frac{(h-x_3) \left( \left(c_0t \right)^2 -|\gamma(\mathbf{r}) - \mathbf{E}_1|^2\right)}{2\left((c_0t)^2+(\mathbf{x}-\gamma(\mathbf{r}))\cdot(\gamma(\mathbf{r})-\mathbf{E}_1)\right) } \ .
\end{equation*}

So, if we can arrange
\begin{equation}
    h < \tilde{\Gamma}(\mathbf{x},\mathbf{r}) < \Gamma(\mathbf{x},\mathbf{r}), \label{under}
\end{equation}

we will have that \eqref{underground_condtion} is satisfied and the artifacts will be underground. This can be done either by backprojecting only the parts of the data for which \eqref{under} holds or instead by arranging the data acquisition geometry such that \eqref{under} holds for every position of the receiver on the flight track.

An example of a setup that yields an artifact surface is displayed in figure \ref{below ground}. Here, a single point scatterer is considered at the point $\mathbf{x}$, and data is measured at receiver locations on the blue surface overhead. The resulting artifact locations, $\mathbf{z}(\mathbf{r})$, are shown as the red surface which is entirely below an example ROI, displayed in green.

\begin{figure}
  \centering
  \includegraphics[width=14cm]{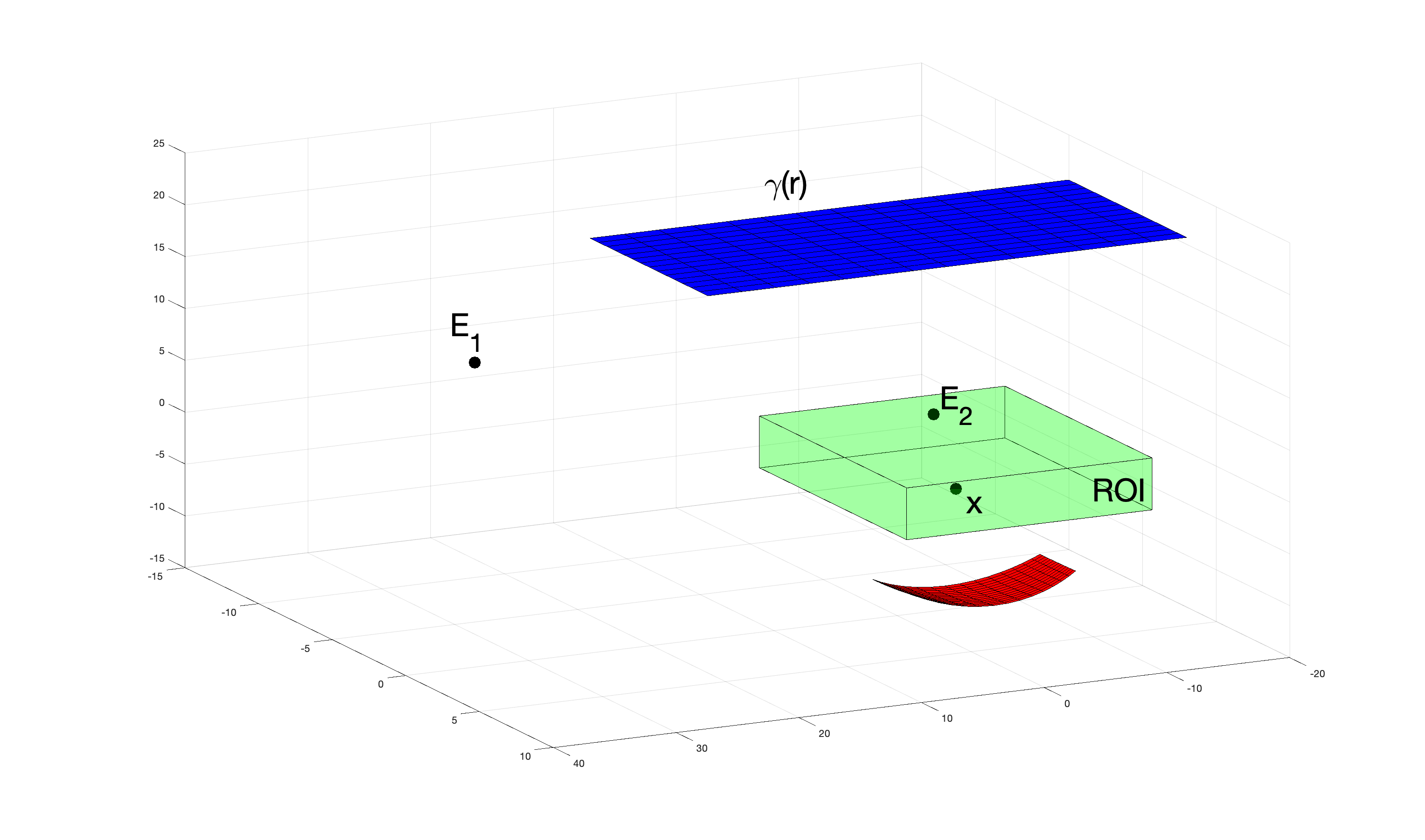}
  \caption{Experimental design involving a single point scatterer at $\mathbf{x}$ that results in artifacts (red) that are underneath a ROI.}
  \label{below ground}
\end{figure}

\subsubsection{Artifact free region above ground}
Next, we determine when the artifacts are above a given height $H$. We set $z_3(\mathbf{r}) > H$, which yields the requirement,

\begin{equation*}
   c < \frac{h-H}{h-x_3}.
\end{equation*}

Using \eqref{c} and \eqref{gamma}, this amounts to

\begin{equation}
    \Gamma(\mathbf{x},\mathbf{r}) < h-H .\label{above_ground_condition}
\end{equation}

Therefore, if \eqref{above_ground_condition} is satisfied we will have that the resulting artifacts must be above height $H$. As before we can develop a potentially more practical way of satisfying \eqref{above_ground_condition}. We denote by $\Bar{\Gamma}(\mathbf{x},r)$ an approximated version of \eqref{gamma} that still satisfies \eqref{above_ground_condition} when $\Gamma(\mathbf{x}, r)$ does. We construct $\Bar{\Gamma}(\mathbf{x},r)$ by first simply omitting the terms in its denominator that are guaranteed to be positive and the terms in its numerator that are negative, meaning $\Gamma(\mathbf{x},r) < \Bar{\Gamma}(\mathbf{x},r)$. We have,

\begin{align*}
   \Bar{\Gamma}(\mathbf{x},\mathbf{r}) = \frac{ (h-x_3) \left(|\mathbf{x}-\gamma(\mathbf{r})| + |\mathbf{x}-\mathbf{E}_2| \right)^2 }{\left( (\mathbf{x}-\gamma(\mathbf{r}))\cdot(\gamma(\mathbf{r})-\mathbf{E}_1)  \right) }
\end{align*}
Again, writing in terms of the two way travel time, $c_0 t$, we are left with

\begin{equation*}
   \Bar{\Gamma}(\mathbf{x},\mathbf{r}) = \frac{(h-x_3)  \left(c_0t \right)^2}{\left((\mathbf{x}-\gamma(\mathbf{r}))\cdot(\gamma(\mathbf{r})-\mathbf{E}_1)\right) }
\end{equation*}

So, if we choose our experimental setup such that
\begin{equation*}
    \Gamma(\mathbf{x},\mathbf{r}) < \Bar{\Gamma}(\mathbf{x},\mathbf{r}) < h-H,
\end{equation*}

we will have that \eqref{above_ground_condition} is automatically satisfied and the artifacts will be located outside and above our region of interest with max height $H$. Again, this can be achieved either by only backprojecting certain parts of the data or by arranging the data acquisition geometry in line with \eqref{above_ground_condition}.

Figure \ref{below ground} shows an example of an experimental design that yields artifacts that are above a ROI, shown in green, due to a point scatterer located at $\mathbf{x}$.

\begin{figure}
  \centering
  \includegraphics[width=16cm]{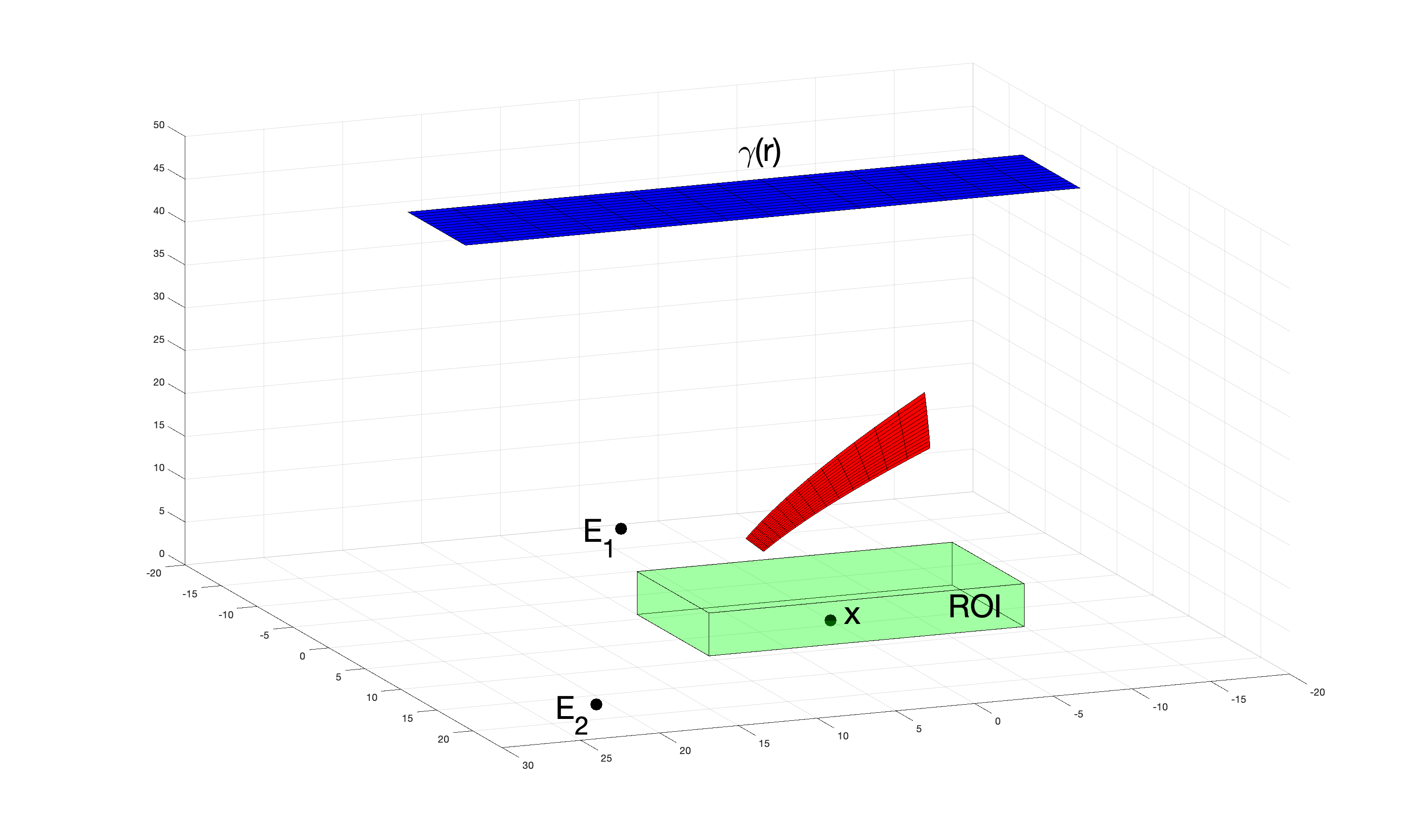}
  \caption{Experimental design with a single point scatterer at $\mathbf{x}$ that results in artifacts (red) that are above a ROI.}
  \label{fig:fig2}
\end{figure}

\subsubsection{Combining conditions}
We also note that \eqref{underground_condtion} and \eqref{above_ground_condition} can be used together to obtain an image where artifacts are outside of the slab. To achieve this, certain data should be excluded from the backprojection. If the converses of \eqref{underground_condtion} and \eqref{above_ground_condition} are satisfied, then the artifacts will be contained within the slab. So, if we omit all data that satisfies
\begin{align}
	h-H<\Gamma(\mathbf{x}, \mathbf{r})< h,\label{subset_condition}
\end{align}

before backprojection, then the image of the ROI will be artifact free as the only resulting artifacts will be above or below the slab. This again comes at the cost of omitting some views of the ROI and, therefore, a possible degradation in the quality of the image.

\subsubsection{A more general region of interest}
The discussions in this section easily can be extended to restrict the artifact surface in any direction, not necessarily just vertically. To illustrate this, we show how one can obtain a radius around the scatterer which is free from artifacts, rather than a slab as we considered previously. To do this we once again start with

\begin{equation*}
    \mathbf{z}(\mathbf{r}) = c(\mathbf{x}-\gamma(\mathbf{r}))+\gamma(\mathbf{r}).
\end{equation*}

We consider the distance from the artifacts, $\mathbf{z}$, to the scatterer, located at $\mathbf{x}$, that produced them, and impose the condition $|\mathbf{z}-\mathbf{x}| > R$ such that a sphere of radius $R$ around the scatterer will be free from artifacts. This gives,
\begin{equation*}
    |\mathbf{z}-\mathbf{x}| = |c(\mathbf{x}-\gamma(\mathbf{r}))+\gamma(\mathbf{r}) - \mathbf{x} | > R
\end{equation*}
\begin{equation*}
     \implies |(c-1)(\mathbf{x}-\gamma(\mathbf{r}))| > R
\end{equation*}
\begin{equation*}
    \implies |c-1||\mathbf{x}-\gamma(\mathbf{r})| > R,
\end{equation*}
\begin{equation}
    \implies |c-1| > \frac{R}{|\mathbf{x}-\gamma(\mathbf{r})|},\label{sphere_condition}
\end{equation}
So to satisfy \eqref{sphere_condition} and ensure that artifacts will be located will be outside of a sphere of radius $R$ centred at the point scatterer located at $\mathbf{x}$ we require,
\begin{equation*}
    c(\mathbf{x},r)< 1- \frac{R}{|\mathbf{x}-\gamma(\mathbf{r})|}, \quad c(\mathbf{x},r)>1+\frac{R}{|\mathbf{x}-\gamma(\mathbf{r})|},
\end{equation*}
where $c$ is as in \eqref{c}. Similarly to before, this can be achieved by omitting data for receiver locations $\gamma(\mathbf{r})$ that satisfy,
\begin{equation*}
     1- \frac{R}{|\mathbf{x}-\gamma(\mathbf{r})|} < c(\mathbf{x}, r) <  1+\frac{R}{|\mathbf{x}-\gamma(\mathbf{r})|}.
\end{equation*}

This could be further generalized by effecting this requirement at all points $\mathbf{x}$ that vary in a more general region of interest.

\subsection{Artifact displacement approach}\label{Artifact displacement approach}

We now move on to our second approach by which one may mitigate the artifacts that are present as a result of crosstalk between emitters. The work here is closely adapted from \cite{Felea_2007}, where the problem considered is that of SAR on a circular flight path. In such a scenario, artifacts appear as reflections of true scatterers across the flight track. A series of FIOs is applied after the imaging operator, that at each iteration, introduce a new artifact into the image while reducing the strength of the pre-existing artifact. At a given iteration, the location of the newest artifact is determined by applying the canonical relation that introduced the artifact in the previous iteration to this same artifact. This is useful because, after each iteration of this process, the newest artifact will be further away from the centre of the flight track than the previous. Meanwhile, the orders of the principal symbol of this composition of FIOs are reduced to $0$ in the locations of all previous artifacts. This means the artifacts of previous iterations will be weakened in the final image. Iterations of this process can be carried out until the artifacts are sufficiently separated from a region of interest. In this section, we show that it is possible to displace artifacts, caused by crosstalk, away from a given region of interest in a similar way.

\subsubsection{When is it possible to displace artifacts?}\label{when are artifacts displaced}
 
In order for it to be possible to displace artifacts in a similar manner to that in \cite{Felea_2007}, we must first check that repeatedly applying the artifact relation, $\mathcal{C}$, given in \eqref{artifact_relation}, to an existing artifact will result in new artifacts that are progressively further away from a ROI.

To illustrate the effect of applying the artifact relation, $\mathcal{C}$, we can extend our discussion in section \ref{description}. We recall that to obtain the artifact location, $\mathbf{z}$, as a result of a scatterer at $\mathbf{x}$, we translate the point $\mathbf{x}$ along the ray connecting $\gamma(\mathbf{r})$ and $\mathbf{x}$ such that $\mathbf{x}$ goes from lying on the ellipsoid with foci at $\gamma(\mathbf{r})$ and $\mathbf{E}_2$ to the ellipsoid with foci at $\gamma(\mathbf{r})$ and $\mathbf{E}_1$. We now wish to apply $\mathcal{C}$ to the artifact at $\mathbf{z}$. So, $\mathbf{z}$ is now translated along the same ray as before. The distance of this translation is such that if we construct an ellipsoid with foci at $\gamma(\mathbf{r})$ and $\mathbf{E}_2$ that contains $\mathbf{z}$, then the translated $\mathbf{z}$, $\tilde{\mathbf{z}}$, will lie on a temporally equivalent ellipsoid with the foci at $\gamma(\mathbf{r})$ and $\mathbf{E}_1$ instead. This process is shown in figure \ref{Displacement of singularities}, where the ellipsoids that are used to determine $\mathbf{z}$ are now dashed and the ellipsoids that determine $\tilde{\mathbf{z}}$ from $\mathbf{z}$ are now heavy. In this way, at each successive application of $\mathcal{C}$, the artifact will traverse either upwards towards the receiver or it will travel downwards and away from the receiver. We will aim to exploit this fact to separate the artifacts from a region of interest. 

\begin{figure}
  \centering
  \includegraphics[width=16cm]{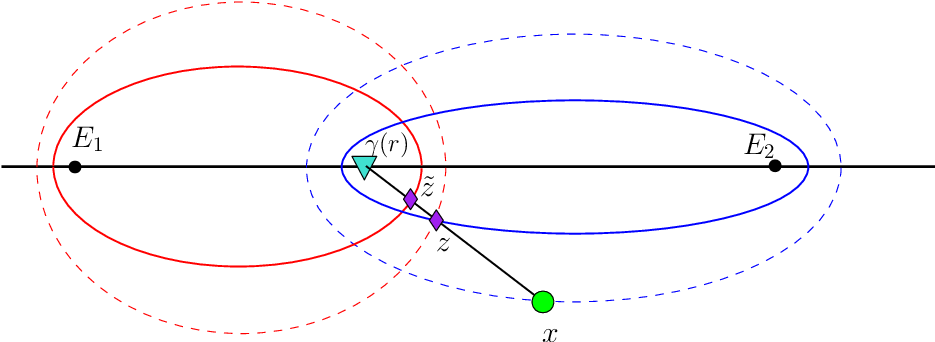}
  \caption{Illustration of the process by which an artifact is displaced from $z$ to $\tilde{z}$ by scaling along the ray connecting $\gamma(r)$ and $x$.}
  \label{Displacement of singularities}
\end{figure}

Next, we wish to rule out certain scenarios that would prevent artifacts from being displaced. For example, it would not make sense to use this method if artifacts looped around within the desired ROI or if the artifact did not move at all. Applying the relation, $\mathcal{C}$, to an artifact will not result in a new artifact that is further away from the ROI if either of the following occur.
\begin{itemize}
    \item The series of displaced artifacts changes direction from travelling away from the receiver to travelling towards it, or vice versa. This could lead to the artifact being trapped in a loop that prevents it from being removed from the region of interest. This could also mean that an artifact that has been displaced out of the ROI could re-enter the ROI at a later iteration. The direction in which the artifact is displaced depends on whether $c<1$ or $c>1$ and this change in direction corresponds to a corresponding change in the magnitude of $c$.
    \item The artifact is not displaced and remains where it is. This happens when the artifact lies on the intersection between the two ellipsoids with foci at $\gamma(\mathbf{r})$ and each $\mathbf{E}_i$. As we will show, this corresponds to $c = 1$ in \eqref{c} and this is where $\mathcal{C} \cap \triangle \neq\emptyset$.
    \item The series of displaced artifacts converge to a point within the ROI. This will occur if $c \rightarrow 1$ as the artifacts approach a point within the ROI.
\end{itemize}

To begin our analysis, we must first determine the locus of points defined by $c=1$. Setting $c = 1$ in \eqref{c},
\begin{flalign*}
     & \left(|\mathbf{x}-\gamma(\mathbf{r})| + |\mathbf{x}-\mathbf{E}_2| \right)^2 -|\gamma(\mathbf{r}) - \mathbf{E}_1|^2   \\&= 2\left((\mathbf{x}-\gamma(\mathbf{r}))\cdot(\gamma(\mathbf{r})-\mathbf{E}_1)+|\mathbf{x}-\gamma(\mathbf{r})|\left(|\mathbf{x}-\gamma(\mathbf{r})| + |\mathbf{x}-\mathbf{E}_2| \right)\right) \ .
\end{flalign*}
Expanding the left hand side,
\begin{flalign*}
     &|\mathbf{x}-\gamma(\mathbf{r})|^2+|\mathbf{x}-\mathbf{E}_2|^2+2|\mathbf{x}-\gamma(\mathbf{r})||\mathbf{x}-\mathbf{E}_2|-|\gamma(\mathbf{r}) - \mathbf{E}_1|^2 \\ &= 2(\mathbf{x}-\gamma(\mathbf{r}))\cdot(\gamma(\mathbf{r})-\mathbf{E}_1) +2|\mathbf{x}-\gamma(\mathbf{r})|^2+2|\mathbf{x}-\gamma(\mathbf{r})||\mathbf{x}-\mathbf{E}_2|.
\end{flalign*}
Some cancellation occurs leaving us with
\begin{equation*}
     |\mathbf{x}-\mathbf{E}_2|^2  = 2(\mathbf{x}-\gamma(\mathbf{r}))\cdot(\gamma(\mathbf{r})-\mathbf{E}_1) +|\mathbf{x}-\gamma(\mathbf{r})|^2+|\gamma(\mathbf{r}) - \mathbf{E}_1|^2.
\end{equation*}
Completing the square yields
\begin{equation*}
     |\mathbf{x}-\mathbf{E}_2|^2  = \left( (\mathbf{x}-\gamma(\mathbf{r})) + (\gamma(\mathbf{r}) - \mathbf{E}_1)\right)^2.
\end{equation*}
Finally we are left with
\begin{equation*}
     |\mathbf{x}-\mathbf{E}_2|^2  = |\mathbf{x}-\mathbf{E}_1|^2,
\end{equation*}
\begin{equation*}
    \implies |\mathbf{x}-\mathbf{E}_2| = |\mathbf{x}-\mathbf{E}_1|.
\end{equation*}

We can similarly obtain that $c>1 \implies |\mathbf{x}-\mathbf{E}_2| > |\mathbf{x}-\mathbf{E}_1|$ and  $c<1 \implies |\mathbf{x}-\mathbf{E}_2| < |\mathbf{x}-\mathbf{E}_1|$. This means that whether the artifact will be displaced towards or away from the receiver depends on what side of the plane, defined by $|\mathbf{x}-\mathbf{E}_1| = |\mathbf{x}-\mathbf{E}_2|$, it is on. Going forward we will denote this plane as $\pi$ . With this, we can now proceed and show that the artifact will be displaced in the same direction throughout all iterations, thus ruling out the possibility that the artifact loops back into the ROI.

First, we consider the case as in figure \ref{Scenario 1}, where $\gamma(\mathbf{r})$ is on the same side of $\pi$ as $\mathbf{E}_2$. Here artifacts on this same side of $\pi$ will be associated to a value of $c<1$. This means that such artifacts on will be displaced towards $\gamma(\mathbf{r})$ and upwards out of the ROI. Artifacts on the other side of $\pi$ will be associated to $c>1$ meaning that they will be pushed away from $\gamma(\mathbf{r})$ and downwards out of the ROI. Points initially lying within $\pi$ are not of concern, as we recall, this is where $c=1$ and $\mathbf{z}=\mathbf{x}$ so we will have true reconstruction here. As all other artifacts are displaced away from $\pi$, no artifact can end up within $\pi$ at a later iteration, where it would otherwise remain for all succeeding iterations. This means that in such a scenario, we can hope to displace all artifacts from the ROI.

The second scenario we must consider is shown in figure \ref{Scenario 2}, where $\gamma(\mathbf{r})$ is now on the same side of $\pi$ as $\mathbf{E}_1$. Points in the ROI on this same side of $\pi$ will be associated with a value of $c>1$, meaning they will be pushed away from the receiver and towards $\pi$. Similarly, on the other side of the plane, where $c<1$, artifacts will be pulled towards $\pi$. We will show that while artifacts in this case are pushed towards $\pi$ they cannot end up within $\pi$ or on the other side of $\pi$.

\begin{proposition}\label{prop}
    Given an artifact at location $\mathbf{z}$, the subsequent location of the artifact, $\tilde{\mathbf{z}}$, which is the result of applying $\mathcal{C}$ to $\mathbf{z}$, will lie on the same side of $\pi$ as $\mathbf{z}$. In other words, $|\mathbf{z}-\mathbf{E}_2| < |\mathbf{z}-\mathbf{E}_1| \implies |\tilde{\mathbf{z}}-\mathbf{E}_2| < |\tilde{\mathbf{z}}-\mathbf{E}_1|$ and $|\mathbf{z}-\mathbf{E}_2| > |\mathbf{z}-\mathbf{E}_1| \implies |\tilde{\mathbf{z}}-\mathbf{E}_2| > |\tilde{\mathbf{z}}-\mathbf{E}_1|$.
\end{proposition}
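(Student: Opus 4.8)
The plan is to collapse the three-dimensional geometry to a one-variable monotonicity argument along the ray through $\gamma(\mathbf{r})$. By the structure of $\mathcal{C}$, the artifact $\tilde{\mathbf{z}}$ is obtained from $\mathbf{z}$ by the same scaling as in \eqref{z_formula}, so $\mathbf{x}$, $\mathbf{z}$ and $\tilde{\mathbf{z}}$ all lie on the ray $\mathbf{p}(s) = \gamma(\mathbf{r}) + s\,\hat{\mathbf{u}}$, $s\geq 0$, with $\hat{\mathbf{u}} = \widehat{(\mathbf{x}-\gamma(\mathbf{r}))}$. Writing $\mathbf{z} = \mathbf{p}(s_z)$ and $\tilde{\mathbf{z}} = \mathbf{p}(s_{\tilde z})$, the claim that $\mathbf{z}$ and $\tilde{\mathbf{z}}$ lie on the same side of $\pi$ becomes a statement about the sign of a single scalar function of $s$, and the whole proposition should drop out once that function is shown to interact correctly with the relation $\mathcal{C}$.

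Next I would introduce the two travel-time functions
\[
    f_1(s) = s + |\mathbf{p}(s)-\mathbf{E}_1|, \qquad f_2(s) = s + |\mathbf{p}(s)-\mathbf{E}_2|,
\]
so that the temporal-equivalence condition defining $\mathcal{C}$ (namely \eqref{c_cond1} applied with $\mathbf{z}$ playing the role of the scatterer) reads simply $f_1(s_{\tilde z}) = f_2(s_z)$. The side of $\pi$ containing $\mathbf{p}(s)$ is governed by the sign of $|\mathbf{p}(s)-\mathbf{E}_2| - |\mathbf{p}(s)-\mathbf{E}_1| = f_2(s)-f_1(s)$, which agrees in sign with $g(s) = |\mathbf{p}(s)-\mathbf{E}_2|^2 - |\mathbf{p}(s)-\mathbf{E}_1|^2$ since the two distances are positive; a point lies on the $\mathbf{E}_2$ side exactly when this difference is negative and on the $\mathbf{E}_1$ side when it is positive. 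The key structural fact I would establish is that $f_1$ and $f_2$ are \emph{strictly increasing} in $s$: differentiation gives $f_i'(s) = 1 + \hat{\mathbf{u}}\cdot\widehat{(\mathbf{p}(s)-\mathbf{E}_i)} > 0$, the cosine term exceeding $-1$ except in the degenerate configuration where $\mathbf{p}(s)$ lies on the segment joining $\gamma(\mathbf{r})$ to $\mathbf{E}_i$, which has already been excluded to keep $\boldsymbol{\xi}\neq 0$.

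With these ingredients the proposition follows by a short sign-chase in two symmetric cases. If $\mathbf{z}$ is on the $\mathbf{E}_2$ side, then $f_2(s_z) < f_1(s_z)$; combining with $f_1(s_{\tilde z}) = f_2(s_z)$ gives $f_1(s_{\tilde z}) < f_1(s_z)$, so strict monotonicity of $f_1$ forces $s_{\tilde z} < s_z$, whence strict monotonicity of $f_2$ yields $f_2(s_{\tilde z}) < f_2(s_z) = f_1(s_{\tilde z})$, i.e. $\tilde{\mathbf{z}}$ is again on the $\mathbf{E}_2$ side. The case where $\mathbf{z}$ lies on the $\mathbf{E}_1$ side is identical with all inequalities reversed. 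The only real obstacle is to pin down the strict monotonicity of $f_1$ and $f_2$ cleanly, confirming that the excluded segment configurations are precisely those where a derivative could vanish; after that the result is a one-line ordering argument. I would also phrase the side-of-$\pi$ characterisation so that its equivalence with the dichotomy $c\lessgtr 1$ established earlier via $|\mathbf{x}-\mathbf{E}_2|\lessgtr|\mathbf{x}-\mathbf{E}_1|$ is transparent, thereby tying the monotonicity proof directly to the $c<1$ / $c>1$ behaviour used in the surrounding displacement discussion.
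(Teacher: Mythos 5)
Your proof is correct, but it takes a genuinely different route from the paper's. The paper argues by contradiction: assuming $\mathbf{z}$ lies on the $\mathbf{E}_2$ side (so $c<1$) but $|\tilde{\mathbf{z}}-\mathbf{E}_1| \leq |\tilde{\mathbf{z}}-\mathbf{E}_2|$, it substitutes $\tilde{\mathbf{z}} = c(\mathbf{z}-\gamma(\mathbf{r}))+\gamma(\mathbf{r})$ into the temporal-equivalence condition \eqref{c_cond1}, squares, and effectively re-derives the formula \eqref{c} with $\mathbf{E}_2$ in place of $\mathbf{E}_1$, unravelling the bound $c<1$ into the absurdity $|\mathbf{z}-\mathbf{E}_2| < |\mathbf{z}-\mathbf{E}_2|$; the opposite case is handled symmetrically. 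You instead collapse the problem to one dimension along the ray $\mathbf{p}(s)=\gamma(\mathbf{r})+s\hat{\mathbf{u}}$ and prove monotonicity of the travel-time functions $f_i(s)=s+|\mathbf{p}(s)-\mathbf{E}_i|$, after which the conclusion is a direct three-step sign-chase from $f_1(s_{\tilde z})=f_2(s_z)$ with no squaring at all. Your version is shorter, turns the paper's informal geometric picture (a common ray piercing two confocal ellipsoids that intersect in $\pi$) into the actual proof mechanism, and yields as a by-product the direction of displacement, $s_{\tilde z}<s_z$ exactly when $c<1$, which the paper has to discuss separately; the paper's computation, on the other hand, stays entirely within the algebraic framework already set up for deriving $c$ and needs no differentiability considerations. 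One point you should make explicit: strict monotonicity of $f_i$ fails not just at isolated points but on a whole interval when the ray passes through $\mathbf{E}_i$ (there $f_i$ is constant on the segment), and the paper's standing assumption only excludes \emph{scatterers} from the segments joining $\gamma(\mathbf{r})$ to the emitters. Since your argument is applied to artifact locations $\mathbf{z}$, $\tilde{\mathbf{z}}$ (and, in the iterated setting, all later iterates), you need the analogous non-degeneracy at each iterate — equivalently $\boldsymbol{\zeta}\neq 0$ in $\Lambda_1^{\prime\, t}$ at the relevant points — to hold at every step; note also that for the implication $f_1(s_{\tilde z})<f_1(s_z)\Rightarrow s_{\tilde z}<s_z$ mere non-decrease of $f_1$ suffices, and strictness is only genuinely needed for $f_2$ on the interval between $s_{\tilde z}$ and $s_z$ to obtain the strict conclusion rather than $\tilde{\mathbf{z}}\in\pi$. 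With that hypothesis stated, your argument is complete and, if anything, cleaner than the paper's.
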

\begin{proof}
    We consider the case in which $\mathbf{z}$ begins on the side of $\pi$ that is closer to $\mathbf{E}_2$,
    \begin{equation*}
        |\mathbf{z}-\mathbf{E}_2| < |\mathbf{z}-\mathbf{E}_1| \iff c<1.
    \end{equation*}
    We will prove by contradiction that, in this circumstance,
    \begin{equation*}
         |\tilde{\mathbf{z}}-\mathbf{E}_2| < |\tilde{\mathbf{z}}-\mathbf{E}_1|,
    \end{equation*}
    and that $\tilde{\mathbf{z}}$ is on the same side of $\pi$ as $\mathbf{z}$. To this end we assume
    \begin{equation*}
         |\tilde{\mathbf{z}}-\mathbf{E}_1| \leq |\tilde{\mathbf{z}}-\mathbf{E}_2|,
    \end{equation*}
    \begin{equation}
        \implies |\tilde{\mathbf{z}}-\mathbf{E}_1| +|\mathbf{z}-\gamma(\mathbf{r})| \leq |\tilde{\mathbf{z}}-\mathbf{E}_2| +|\mathbf{z}-\gamma(\mathbf{r})|\label{bound}.
    \end{equation}
    The points $\mathbf{z}$ and $\tilde{\mathbf{z}}$ satisfy \eqref{c_cond1},
    \begin{equation*}
        |\mathbf{z}-\mathbf{E}_2|+|\mathbf{z}-\gamma(\mathbf{r})|=|\tilde{\mathbf{z}}-\mathbf{E}_1|+|\tilde{\mathbf{z}}-\gamma(\mathbf{r})|.
    \end{equation*}
    Using \eqref{bound} in the above gives
    \begin{equation*}
        |\mathbf{z}-\mathbf{E}_2|+|\mathbf{z}-\gamma(\mathbf{r})|\leq|\tilde{\mathbf{z}}-\mathbf{E}_2|+|\tilde{\mathbf{z}}-\gamma(\mathbf{r})|.
    \end{equation*}
    We now use the fact that $\tilde{\mathbf{z}} = c(\mathbf{z}-\gamma(\mathbf{r}))+\gamma(\mathbf{r})$ to obtain
    \begin{equation*}
        |\mathbf{z}-\mathbf{E}_2|+|\mathbf{z}-\gamma(\mathbf{r})|\leq|c(\mathbf{z}-\gamma(\mathbf{r}))+\gamma(\mathbf{r})-\mathbf{E}_2|+|c(\mathbf{z}-\gamma(\mathbf{r}))|,
    \end{equation*}
    \begin{equation*}
        \implies |\mathbf{z}-\mathbf{E}_2|+(1-c)|\mathbf{z}-\gamma(\mathbf{r})| \leq |c(\mathbf{z}-\gamma(\mathbf{r}))+\gamma(\mathbf{r})-\mathbf{E}_2|.
    \end{equation*}
    Squaring both sides gives
    \begin{flalign*}
        \left( |\mathbf{z}-\mathbf{E}_2| + |\mathbf{z}-\gamma(\mathbf{r})|\right)^2 - 2c|\mathbf{z}-\gamma(\mathbf{r})|\left( |\mathbf{z}-\mathbf{E}_2| + |\mathbf{z}-\gamma(\mathbf{r})|\right) + c^2|\mathbf{z}-\gamma(\mathbf{r})|^2 \\ \leq c^2|\mathbf{z}-\gamma(\mathbf{r})|^2+2c(\mathbf{z}-\gamma(\mathbf{r}))\cdot(\gamma(\mathbf{r})-\mathbf{E}_2)+|\gamma(\mathbf{r})-\mathbf{E}_2|^2,
    \end{flalign*}

    \begin{equation*}
        \implies c\geq \frac{\left( |\mathbf{z}-\mathbf{E}_2| + |\mathbf{z}-\gamma(\mathbf{r})|\right)^2-|\gamma(\mathbf{r})-\mathbf{E}_2|^2}{2\left((\mathbf{z}-\gamma(\mathbf{r}))\cdot(\gamma(\mathbf{r})-\mathbf{E}_2) + |\mathbf{z}-\gamma(\mathbf{r})|\left(|\mathbf{z}-\mathbf{E}_2|+|\mathbf{z}-\gamma(\mathbf{r})|\right)\right)}.
    \end{equation*}
    Recalling that $c<1$,
    \begin{equation*}
        \frac{\left( |\mathbf{z}-\mathbf{E}_2| + |\mathbf{z}-\gamma(\mathbf{r})|\right)^2-|\gamma(\mathbf{r})-\mathbf{E}_2|^2}{2\left((\mathbf{z}-\gamma(\mathbf{r}))\cdot(\gamma(\mathbf{r})-\mathbf{E}_2) + |\mathbf{z}-\gamma(\mathbf{r})|\left(|\mathbf{z}-\mathbf{E}_2|+|\mathbf{z}-\gamma(\mathbf{r})|\right)\right)} < 1,
    \end{equation*}
    \begin{equation*}
        \implies |\mathbf{z}-\mathbf{E}_2|^2 - |\gamma(\mathbf{r})-\mathbf{E}_2|^2 < 2(\mathbf{z}-\gamma(\mathbf{r}))\cdot(\gamma(\mathbf{r})-\mathbf{E}_2) + |\mathbf{z}-\gamma(\mathbf{r})|^2,
    \end{equation*}
    \begin{equation*}
        \implies |\mathbf{z}-\mathbf{E}_2|^2  < \left( (\mathbf{z}-\gamma(\mathbf{r})) + (\gamma(\mathbf{r}) -\mathbf{E}_2)\right)^2,
    \end{equation*}
    \begin{equation*}
        \implies |\mathbf{z}-\mathbf{E}_2|^2  < |\mathbf{z}-\mathbf{E}_2|^2 .
    \end{equation*}
        \begin{equation*}
        \implies |\mathbf{z}-\mathbf{E}_2|  < |\mathbf{z}-\mathbf{E}_2|.
    \end{equation*}
    This contradiction tells us that the assumption that $|\tilde{\mathbf{z}}-\mathbf{E}_1| \leq |\tilde{\mathbf{z}}-\mathbf{E}_2|$ is false and instead we must have $|\tilde{\mathbf{z}}-\mathbf{E}_2|<|\tilde{\mathbf{z}}-\mathbf{E}_1|.$ 
    
    The proof is similar for the related case where initially $|\mathbf{z}-\mathbf{E}_2| > |\mathbf{z}-\mathbf{E}_1|$ where the subsequent artifact location, $\tilde{\mathbf{z}}$, satisfies $|\tilde{\mathbf{z}}-\mathbf{E}_2|>|\tilde{\mathbf{z}}-\mathbf{E}_1|$.
\end{proof}

This result makes sense geometrically, as $\mathbf{z}$ and $\tilde{\mathbf{z}}$ are located at the intersections of a ray emanating from $\gamma(\mathbf{r})$ with two ellipsoidal surfaces that have a common focus at $\gamma(\mathbf{r})$. These ellipsoidal surfaces intersect within the plane $\pi$. Then, as the ray emanates from within the pair of ellipsoids it must pierce both on the same side of $\pi$, meaning $\mathbf{z}$ and $\tilde{\mathbf{z}}$ will lie on the same side of $\pi$. 

We now know that throughout all iterations artifacts will be propagated in a single direction. They will either be pushed or pulled from $\gamma(\mathbf{r})$ indefinitely, or will iteratively approach $\pi$. The first two possibilities mean that artifacts will raise above or lower to be below a ROI. Then with the restriction that $\pi$ does not intersect the ROI we guarantee that the artifacts that approach $\pi$ will eventually leave the ROI. This restriction can be satisfied by appropriately designing the experiment where possible. In cases where it cannot be arranged that $\pi$ is separated from the ROI then, as we will discuss in the next subsection, beam forming can be used to achieve a situation where artifacts will still be displaced from the ROI.

Finally, we note that the distance that an artifact is displaced is governed by the magnitude of $c$. The magnitude of $c$ is greater at points further away from $\pi$. With this in mind, it makes sense to, if possible, arrange that the ROI is as far away from $\pi$ as possible. This means that artifacts will be displaced further at each iteration and fewer iterations will be required to remove all artifacts from the ROI. 

\begin{figure}
  \centering
  \includegraphics[width=10cm]{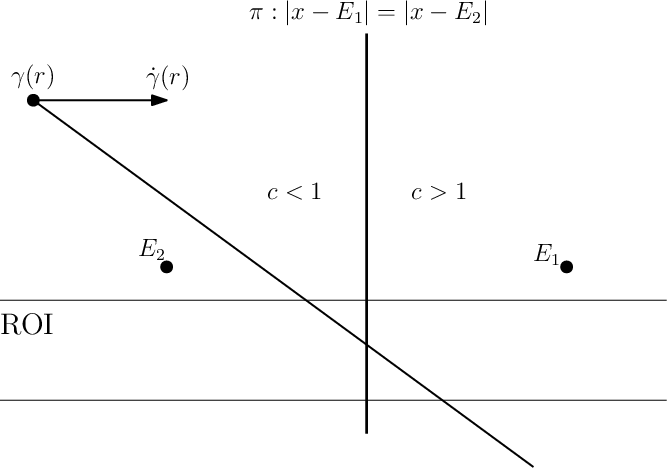}
  \caption{An experimental design where artifacts on the same side of $\pi$ as $\gamma(r)$ will be associated to a value of $c<1$.}
  \label{Scenario 1}
\end{figure}

\begin{figure}
  \centering
  \includegraphics[width=10cm]{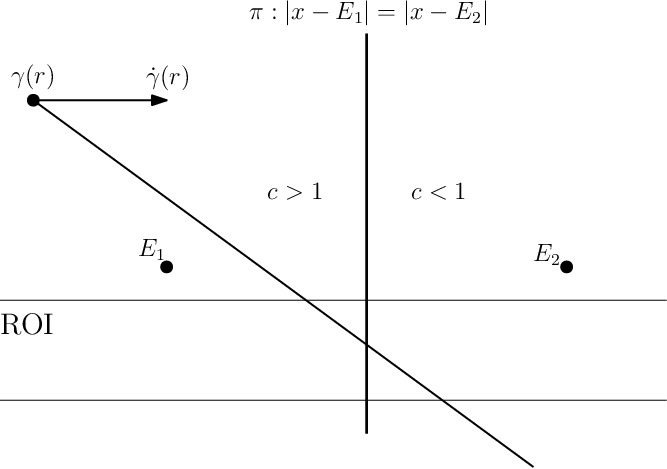}
  \caption{An experimental design where artifacts on the same side of $\pi$ as $\gamma(r)$ will be associated to a value of $c>1$.}
  \label{Scenario 2}
\end{figure}

\subsubsection{Role of beam forming}\label{beam_forming}

Here, we outline how beam forming can be used to achieve a scenario in which crosstalk artifacts can always be displaced away from the desired ROI. As discussed in section \ref{when are artifacts displaced}, for the artifacts to be displaced we must eliminate the possibility that rays, connecting points in the ROI to receiver locations on the flight track, intersect $\pi$ within the ROI. Beam forming can be utilised to remove all such rays that intersect $\pi$ in the ROI, making it possible to displace artifacts when it would otherwise not be. 

With this in mind, we consider the setup displayed in figure \ref{fig:fig4}, where the ROI is slab shaped, between ground level and a fixed height above ground. For a given receiver location, $\gamma(\mathbf{r})$, we wish to determine the critical angle $\theta_c$, that is formed between the ray emanating from $\gamma(\mathbf{r})$ and the horizontal flight track, such that the ray intersects $\pi$ at ground level and at the base of the ROI. Any beam that forms an angle of or less than $\theta_c$ should be removed via beam forming as they will intersect $\pi$ within the ROI and thus prevent us from being able to displace artifacts. All other beams that form an angle greater than $\theta_c$ will be of no concern as these rays will intersect $\pi$ below ground level and outside the ROI.

We let $\mathbf{x}_b$ denote the point on the ground where the ray emanating from $\gamma(\mathbf{r})$ and the plane $\pi$ intersect. Letting $\mathbf{E}_1 = (\alpha_1, \alpha_2, \alpha_3)$ and $\mathbf{E}_2 = (\Tilde{\alpha}_1, \Tilde{\alpha}_2, \Tilde{\alpha}_3)$, we first find the line in the horizontal plane where $\pi$ intersects the ground by setting $x_3 = 0$ in $|\mathbf{x}-\mathbf{E}_1| = |\mathbf{x}-\mathbf{E}_2|$, giving

\begin{equation*}
     \sqrt{(x_1-\alpha_1)^2 + (x_2 - \alpha_2)^2 + \alpha_3^2} = \sqrt{(x_1-\Tilde{\alpha}_1)^2 + (x_2 - \Tilde{\alpha}_2)^2 + \Tilde{\alpha}_3^2}.
\end{equation*}

 Rearranging this gives the equation of the line in the $x_1x_2$ plane corresponding to the intersection of $\pi$ with the ground,
 
 \begin{equation}
     x_2 = x_1 \left(\frac{\Tilde{\alpha}_1 - \alpha_1}{\alpha_2 - \Tilde{\alpha}_2}\right)+\frac{(\alpha_1^2 + \alpha_2^2 + \alpha_3^2) - (\Tilde{\alpha}_1^2 + \Tilde{\alpha}_2^2 + \Tilde{\alpha}_3^2)}{2(\alpha_2 - \Tilde{\alpha}_2)}\label{line1}.
 \end{equation}
 
The slope of this line is

\begin{equation*}
    m_1 = \frac{\Tilde{\alpha}_1 - \alpha_1}{\alpha_2 - \Tilde{\alpha}_2}.
\end{equation*}

Next, we are interested in the ray emanating from $\gamma(\mathbf{r})$ with horizontal components perpendicular to the intersection of $\pi$ with the ground. In particular, the projection of this ray onto the ground will have slope perpendicular to $m_1$, given by
 \begin{equation*}
     m_2 = \frac{\Tilde{\alpha}_2 - \alpha_2}{\Tilde{\alpha}_1 - \alpha_1 }.
 \end{equation*}
 The horizontal projection of this ray will also pass through the point $(r_1, r_2)$ and therefore has equation

 \begin{equation}
     x_2 = \left(\frac{\Tilde{\alpha}_2 - \alpha_2}{\Tilde{\alpha}_1 - \alpha_1 }\right)(x_1-r_1)+r_2. \label{line2}
 \end{equation}

The point $\mathbf{x}_b$ is the intersection of \eqref{line1} and \eqref{line2} and has coordinates $\mathbf{x}_b = (\Bar{x_1}, \Bar{x_2}, 0)$ where
\begin{flalign*}
    \Bar{x_1} = &\left(\frac{(\Tilde{\alpha}_1-\alpha_1)(\alpha_2 - \Tilde{\alpha}_2)}{(\alpha_1 - \Tilde{\alpha}_1)(\Tilde{\alpha}_1-\alpha_1)+(\Tilde{\alpha}_2-\alpha_2)(\alpha_2 - \Tilde{\alpha}_2)}\right)\\& \left( \frac{(\alpha_1^2 + \alpha_2^2 + \alpha_3^2) - (\Tilde{\alpha}_1^2 + \Tilde{\alpha}_2^2 + \Tilde{\alpha}_3^2)}{2(\alpha_2 - \Tilde{\alpha}_2)} +r_1\left(\frac{\Tilde{\alpha}_2-\alpha_2}{\Tilde{\alpha}_1-\alpha_1}\right)-r_2\right),
\end{flalign*}
 and 
 \begin{equation*}
     \Bar{x_2} = \left(\frac{\Tilde{\alpha}_2 - \alpha_2}{\Tilde{\alpha}_1 - \alpha_1 }\right)(\Bar{x_1}-r)+r_2.
 \end{equation*}

 We then let $\mathbf{x}_h$ be the point at the same height as $\gamma(\mathbf{r})$ whose horizontal projection is $\mathbf{x}_b$, i.e., $\mathbf{x}_h= (\Bar{x_1},\Bar{x_2},h)$. With knowledge of these points, we can calculate the value of the angle $\theta_c$ such that the ray emanating from $\gamma(\mathbf{r})$ will intersect $\pi$ on the ground. Any beam forming an angle greater than $\theta_c$ with the flight track will intersect $\pi$ underground as desired. Using our knowledge of $x_h$ and basic trigonometry we have that

\begin{equation*}
    \theta_c(\mathbf{r}) = \arctan\left(\frac{h}{|\mathbf{x}_h(\mathbf{r}) - \gamma(\mathbf{r})|}\right).
\end{equation*}

We specify the dependence of $\theta_c$ on $r$ as this angle will vary with the location of the receiver.

\begin{figure}
  \centering
  \includegraphics[width=10cm]{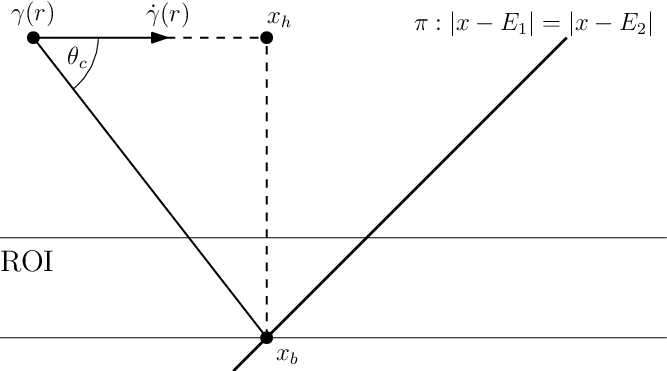}
  \caption{Beam forming can be utilised to omit beams that form an angle less than $\theta_c$ with the horizontal flight track, thus preventing the intersection of rays with $\pi$ within the ROI and facilitating the displacement of artifacts.}
  \label{fig:fig4}
\end{figure}

\subsubsection{Combining backprojection operators}\label{combining} 
We note here that our discussions thus far have been in the context of the mixed operator $F_1^{*}F_2$. Had we chosen $F_2^{*}$ as our backprojection operator rather than $F_1^{*}$ we would have obtained the mixed term operator $F_2^{*}F_1$. Equivalent analysis of this operator can be carried out to show that in this case that $c<1 \implies |\mathbf{x}-\mathbf{E}_2| > |\mathbf{x}-\mathbf{E}_1|$ and  $c>1 \implies |\mathbf{x}-\mathbf{E}_2| < |\mathbf{x}-\mathbf{E}_1|$, which is the reverse as that obtained when considering the operator $F_1^{*}F_2$. In this way, changing backprojection operator from $F_1^{*}$ to $F_2^{*}$ will correspond to a change in the value of $c$ that changes the direction in which the artifacts are displaced.

This can be used to achieve a scenario where $F_1^{*}$ is applied to the data recorded on one side of $\pi$ and $F_2^{*}$ is applied to the data recorded on the other side. This would be done such that the artifacts can be displaced out of the ROI on both sides, even when $\pi$ intersects the ROI. Then, the images on either side of the plane $|\mathbf{x}-\mathbf{E}_1| = |\mathbf{x}-\mathbf{E}_2|$ can be combined to give an image of the entire ROI. This means that we can choose an appropriate backprojection operator for either side of $\pi$ and then construct a series of operators that filter the artifacts from each side of the plane respectively. Then putting the two sections together will result in an image that is completely free from artifacts. 

Finally, we mention the fact that when beam forming is used to exclude rays passing from one side of $|\mathbf{x}-\mathbf{E}_1| = |\mathbf{x}-\mathbf{E}_2|$ to the other within the ROI there will be less coverage of the region near to this plane. This deficiency can be combated with the inclusion of another emitter, $\mathbf{E}_3$, that is positioned such that the plane $|\mathbf{x}-\mathbf{E}_1| = |\mathbf{x}-\mathbf{E}_2|$ is separated from $|\mathbf{x}-\mathbf{E}_1| = |\mathbf{x}-\mathbf{E}_3|$. We can use the data from $\mathbf{E}_3$ to provide full coverage of the region near $|\mathbf{x}-\mathbf{E}_1| = |\mathbf{x}-\mathbf{E}_2|$ that was previously omitted. Similarly, there will be issues of coverage near the plane $|\mathbf{x}-\mathbf{E}_1| = |\mathbf{x}-\mathbf{E}_3|$. However, we can use $\mathbf{E}_2$ to provide full coverage of this region. In this way we can use combination of the three backprojection operators $F_1^{*}$, $F_2^{*}$ and $F_3^{*}$ to create three sections of the final image, in all of which, the artifacts can be displaced out of the ROI. Then when artifacts have been displaced and the three sections are combined, we will have a full coverage and artifact free image of the ROI.

\subsection{Calculating series of FIOs to filter artifacts}\label{filtering_process}

In this section, we adapt the calculation from \cite{Felea_2007} to determine a series of FIOs which will displace the artifacts as discussed previously. We first state the following theorem that relates the principal symbol of FIOs to that of their composition.

\begin{theorem}
    \cite{Duistermaat2011} If $A_1 \in I^{m_1}(X, Y, C_1)$ and $A_2 \in I^{m_2}(Y, Z, C_2)$, with principal symbols $\sigma_{A_1}$, $\sigma_{A_2}$ respectively, and if $C_1 \times C_2$ intersects $T^{*}X \times \triangle_{T^{*}Y} \times T^{*}Z$ transversally, then $A_1A_2 \in I^{m_1+m_2} (X, Z, C_1 \circ C_2) $ and its principal symbol, $\sigma_{A_1 A_2}$, is given by
\begin{equation*}
    \sigma_{A_1 A_2} (\mathbf{x}, \boldsymbol{\xi}; \mathbf{z}, \boldsymbol{\zeta}) = \sum_{S} \sigma_{A_1} (\mathbf{x}, \boldsymbol{\xi} , \mathbf{y}, \boldsymbol{\eta})\sigma_{A_2} (\mathbf{y}, \boldsymbol{\eta}, \mathbf{z}, \boldsymbol{\zeta} ),
\end{equation*}
where $S = \{(\mathbf{y}, \boldsymbol{\eta}) : (\mathbf{x}, \boldsymbol{\xi} , \mathbf{y}, \boldsymbol{\eta}) \in C_1 \text{ and } (\mathbf{y}, \boldsymbol{\eta}, \mathbf{z}, \boldsymbol{\zeta} ) \in C_2\}$.
\end{theorem}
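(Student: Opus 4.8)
The statement is the transverse composition theorem for Fourier integral operators together with its symbol calculus, so the plan is to reduce everything to a single oscillatory-integral representation and then extract the leading term by stationary phase. First I would choose local nondegenerate phase functions $\phi_1(\mathbf{x},\mathbf{y},\theta)$ and $\phi_2(\mathbf{y},\mathbf{z},\sigma)$ parametrising $C_1$ and $C_2$ near the point of interest, writing the Schwartz kernels of $A_1$ and $A_2$ as oscillatory integrals with amplitudes $a_1,a_2$ of the orders dictated by $m_1,m_2$. Since the kernel of the composition is obtained by integrating the product of kernels over the intermediate manifold $Y$, I would write
\[
K_{A_1A_2}(\mathbf{x},\mathbf{z})=\int e^{i\Phi(\mathbf{x},\mathbf{z};\,\mathbf{y},\theta,\sigma)}\,a_1(\mathbf{x},\mathbf{y},\theta)\,a_2(\mathbf{y},\mathbf{z},\sigma)\,d\mathbf{y}\,d\theta\,d\sigma,\qquad \Phi=\phi_1+\phi_2,
\]
so that the composite is again an oscillatory integral, now with the intermediate variable $\mathbf{y}$ adjoined to the phase variables $(\theta,\sigma)$.

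Next I would check that $\Phi$ is a nondegenerate phase function and that its critical set reproduces $C_1\circ C_2$. Setting $d_\theta\Phi=d_\theta\phi_1=0$ places $(\mathbf{x},\mathbf{y})$ on the base of $C_1$, $d_\sigma\Phi=d_\sigma\phi_2=0$ places $(\mathbf{y},\mathbf{z})$ on the base of $C_2$, and $d_{\mathbf{y}}\Phi=d_{\mathbf{y}}\phi_1+d_{\mathbf{y}}\phi_2=0$ forces the two intermediate covectors to agree, $\boldsymbol{\eta}=-d_{\mathbf{y}}\phi_1=d_{\mathbf{y}}\phi_2$. This is exactly the fibre-product condition defining $C_1\circ C_2$, and the matched covector $(\mathbf{y},\boldsymbol{\eta})$ is a point of $S$. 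The crucial point is that nondegeneracy of $\Phi$ in the enlarged phase variables $(\mathbf{y},\theta,\sigma)$ is equivalent to the hypothesis that $C_1\times C_2$ meets $T^{*}X\times\triangle_{T^{*}Y}\times T^{*}Z$ transversally; this is what guarantees simultaneously that $C_1\circ C_2$ is a smooth (immersed) canonical relation of the expected dimension, that the projection from the fibre product to $C_1\circ C_2$ is a local diffeomorphism (so the fibre $S$ over each point is discrete), and that the stationary-phase reduction below is legitimate.

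With the phase identified I would then carry out the order count and the symbol computation. The amplitude $a_1a_2$ has order $\mu_1+\mu_2$, the number of phase variables is now $N_1+N_2+\dim Y$, and the half-density produced by integration over $Y$ absorbs the apparent dimensional mismatch; feeding this into the standard order formula for Lagrangian distributions (the transverse case being the zero-excess case of the clean-intersection calculus) yields total order $m_1+m_2$. For the principal symbol I would apply stationary phase in the adjoined variables: under transversality the Hessian of $\Phi$ in $(\mathbf{y},\theta,\sigma)$ is nondegenerate, the factor $|\det\Phi''|^{-1/2}$ splits into the pieces carried by $\phi_1$ and $\phi_2$, and the leading term localises at the finitely many critical points, i.e.\ at the points of $S$. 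Tracking the half-densities on $C_1$ and $C_2$ through this reduction, they pair at the fibre product into a half-density on $C_1\circ C_2$, and the resulting leading amplitude is precisely $\sum_{S}\sigma_{A_1}(\mathbf{x},\boldsymbol{\xi},\mathbf{y},\boldsymbol{\eta})\,\sigma_{A_2}(\mathbf{y},\boldsymbol{\eta},\mathbf{z},\boldsymbol{\zeta})$.

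The main obstacle, and the step I would spend most care on, is the linear-algebra bookkeeping in this last stage: showing that the Hessian determinant of the composite phase in $(\mathbf{y},\theta,\sigma)$ factors against the individual Hessian determinants of $\phi_1$ (in $\theta$) and $\phi_2$ (in $\sigma$) together with the coupling block in $\mathbf{y}$, so that the $|\det|^{-1/2}$ factors and the Maslov indices assemble exactly into the product of the two principal symbols rather than into some twisted version of it. It is precisely the transversality hypothesis that makes this coupling block invertible and the factorisation clean, so the verification of transversality and the Hessian factorisation are the two technical hearts of the argument.
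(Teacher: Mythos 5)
The paper does not actually prove this statement: it is imported verbatim as a background result, with the citation to Duistermaat standing in for the argument, so there is no internal proof to compare against. Your sketch is, in substance, the canonical proof found in that reference (and in H\"ormander's treatment of transverse composition): represent the two kernels by oscillatory integrals with nondegenerate phases $\phi_1,\phi_2$, adjoin the intermediate variable $\mathbf{y}$ to the phase variables of the composite kernel, check that the critical set of $\Phi=\phi_1+\phi_2$ recovers exactly the fibre product defining $C_1\circ C_2$ with matched intermediate covector $\boldsymbol{\eta}=-d_{\mathbf{y}}\phi_1=d_{\mathbf{y}}\phi_2$, identify the transversality hypothesis with nondegeneracy of $\Phi$ (the zero-excess case of the clean-intersection calculus), and extract the order $m_1+m_2$ and the symbol formula $\sum_{S}\sigma_{A_1}\sigma_{A_2}$ by stationary phase, with the half-densities on $C_1$ and $C_2$ pairing into one on $C_1\circ C_2$. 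Two technical points your outline passes over, which a complete write-up must handle: first, $\Phi$ is not homogeneous of degree one in the enlarged fibre variables $(\mathbf{y},\theta,\sigma)$, since $\mathbf{y}$ scales with degree zero; the standard fix is to localise microlocally to the region $|\theta|\sim|\sigma|$ (available because the intermediate covector is nonvanishing) and rescale, or to invoke the clean phase function formalism directly. Second, for the kernel composition, the convergence of the $\mathbf{y}$-integral, and the discreteness and finiteness of the fibre $S$ to be legitimate, one needs the no-zero-section and properness hypotheses --- precisely the conditions the paper lists separately in Theorem~\ref{composition_theorem} --- which your ``kernels compose'' step tacitly assumes. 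With those caveats supplied, your argument is correct and is exactly the proof to which the paper's citation points.
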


\begin{definition}
        \cite{Duistermaat2011} The operator $P$ is called microlocally elliptic if it's principal symbol $\sigma(\mathbf{x},\boldsymbol{\xi})$ satisfies
    \begin{equation*}
        \xi \neq 0 \implies \sigma(\mathbf{x},\boldsymbol{\xi})\neq 0 
    \end{equation*}
\end{definition}

Let $F=F_1 + F_2$. Provided the experiment is designed in line with the discussions in section \ref{when are artifacts displaced} such that $\triangle$ does not intersect $\text{Gr}(\mathcal{C})$ we will have that $F_1^{*}F\in I^{2m}(\triangle)+I^{2m}(\text{Gr}(\mathcal{C}))$. We will construct the first operator in the series, $Q_1$, such that $Q_1 \in I^{0}(\triangle)+I^{0}(\text{Gr}(\mathcal{C}))$. It follows that
\begin{equation*}
    Q_1F_1^{*}F\in I^{2m}(\triangle)+I^{2m}(\text{Gr}(\mathcal{C}))+I^{2m}(\text{Gr}(\mathcal{C}^2))
\end{equation*}
There is one contribution to $\triangle$ from $\triangle\circ \triangle$ and there are two contributions to $\text{Gr}(\mathcal{C})$, from $\triangle\circ \mathcal{C}$ and $\mathcal{C}\circ\triangle$.

We have on $\triangle$,
\begin{equation*}
      \sigma_{Q_1F_1^{*}F}|_\triangle(\mathbf{x}, \boldsymbol{\xi}; \mathbf{x}, \boldsymbol{\xi}) = \sigma_{Q_1F_1^{*}F}|_{\triangle\circ\triangle}(\mathbf{x}, \boldsymbol{\xi}; \mathbf{x}, \boldsymbol{\xi})=\sigma_{Q_1}|_{\triangle}(\mathbf{x}, \boldsymbol{\xi}; \mathbf{x}, \boldsymbol{\xi})\sigma_{F_1^{*}F}|_{\triangle}(\mathbf{x}, \boldsymbol{\xi}; \mathbf{x}, \boldsymbol{\xi}) .
\end{equation*}
On $\text{Gr}(\mathcal{C})$ we have
\begin{equation*}
    \sigma_{Q_1F_1^{*}F}|_{\text{Gr}(\mathcal{C})} (\mathcal{C}(\mathbf{x}, \boldsymbol{\xi});\mathbf{x}, \boldsymbol{\xi})=\sigma_{Q_1F_1^{*}F}|_{\triangle\circ\text{Gr}(\mathcal{C})} (\mathcal{C}(\mathbf{x}, \boldsymbol{\xi});\mathbf{x}, \boldsymbol{\xi}) + \sigma_{Q_1F_1^{*}F}|_{\text{Gr}(\mathcal{C})\circ\triangle}(\mathcal{C}(\mathbf{x}, \boldsymbol{\xi});\mathbf{x}, \boldsymbol{\xi})
\end{equation*}
\begin{equation*}
    =\sigma_{Q_1}|_{\triangle}(\mathbf{x}, \boldsymbol{\xi}; \mathbf{x}, \boldsymbol{\xi})\sigma_{F_1^{*}F}|_{\text{Gr}(\mathcal{C})}(\mathcal{C}(\mathbf{x}, \boldsymbol{\xi});\mathbf{x}, \boldsymbol{\xi})+\sigma_{Q_1}|_{\text{Gr}(\mathcal{C})}(\mathcal{C}(\mathbf{x}, \boldsymbol{\xi});\mathbf{x}, \boldsymbol{\xi})\sigma_{F_1^{*}F}|_{\triangle}(\mathbf{x}, \boldsymbol{\xi}; \mathbf{x}, \boldsymbol{\xi}).
\end{equation*}
Proceeding, we will drop the dependence on the variables $(\mathbf{x},\boldsymbol{\xi})$ from each of the principal symbols. We will construct $Q_1$ such that $Q_1F_1^{*}F$ is elliptic on $\triangle$ and $\text{Gr}(\mathcal{C}^2)$ and it's amplitude vanishes on $\text{Gr}(\mathcal{C})$. This means that the original artifact will be weakened and the true scatterer as well as displaced artifact will be left in the image at the same strength. We note also that this calculation operates at principal symbol level when there are in fact lower order contributions from these FIOs. However, the validity of this approximation will be borne out in the numerical simulations later.

Therefore, we wish to choose $Q_1$ so that the following is valid:
\begin{equation*}
    \sigma_{Q_1F_1^{*}F}|_\triangle =\omega^{2m},
\end{equation*}
\begin{equation*}
    \sigma_{Q_1F_1^{*}F}|_{\text{Gr}(\mathcal{C})} = 0.
\end{equation*}
Therefore, we want to solve the above system for $\sigma_{Q_1}|_{\triangle}$ and $\sigma_{Q_1}|_{\text{Gr}(\mathcal{C})}$. The system is
\begin{equation*}
    \begin{pmatrix}
        \sigma_{F_1^{*}F}|_{\triangle} & 0 \\
        \sigma_{F_1^{*}F}|_{\text{Gr}(\mathcal{C})} & \sigma_{F_1^{*}F}|_{\triangle}
    \end{pmatrix}
    \begin{pmatrix}
        \sigma_{Q_1}|_{\triangle}\\
        \sigma_{Q_1}|_{\text{Gr}(\mathcal{C})}
    \end{pmatrix}
    =\begin{pmatrix}
        \omega^{2m} \\
        0
    \end{pmatrix}.
\end{equation*}
We have that
\begin{equation*}
    \det\begin{pmatrix}
        \sigma_{F_1^{*}F}|_{\triangle} & 0 \\
        \sigma_{F_1^{*}F}|_{\text{Gr}(\mathcal{C})} & \sigma_{F_1^{*}F}|_{\triangle}
    \end{pmatrix} = (\sigma_{F_1^{*}F}|_{\triangle})^2 \neq 0,
\end{equation*}
(given that $F_1^{*}F$ is elliptic on $\triangle$).
By Cramer's rule:
\begin{equation}
    \sigma_{Q_1}|_{\triangle} = \frac{\omega^{2m}\sigma_{F_1^{*}F}|_{\triangle}}{(\sigma_{F_1^{*}F}|_{\triangle})^2}= \frac{\omega^{2m}}{\sigma_{F_1^{*}F}|_{\triangle}},\label{amp_diag}
\end{equation}
\begin{equation}
    \sigma_{Q_1}|_{\text{Gr}(\mathcal{C})}=\frac{-\omega^{2m}\sigma_{F_1^{*}F}|_{\text{Gr}(\mathcal{C})}}{(\sigma_{F_1^{*}F}|_{\triangle})^2},\label{amp_mixed}
\end{equation}
giving us the required amplitudes to construct the operator $Q_1$.

A similar calculation is used to construct the operator at the second iteration, $Q_2$. Before proceeding we note that our discussions in section ensure that there is no intersection between $\text{Gr}(\mathcal{C}^i)$ and $\text{Gr}(\mathcal{C}^j)$ when $i\neq j$. We then have that $Q_1F_1^{*}F\in I^{2m}(\triangle)+I^{2m}(\text{Gr}(\mathcal{C}))+I^{2m}(\text{Gr}(\mathcal{C}^2))$ and we choose $Q_2\in I^{0}(\triangle)+I^{0}(\text{Gr}(\mathcal{C}))+I^{0}(\text{Gr}(\mathcal{C}^2))$. Then $Q_2Q_1F_1^{*}F\in I^{2m}(\triangle)+I^{2m}(\text{Gr}(\mathcal{C}))+I^{2m}(\text{Gr}(\mathcal{C}^2))+I^{2m}(\text{Gr}(\mathcal{C}^3))+I^{2m}(\text{Gr}(\mathcal{C}^4))$. We will choose $Q_2$ such that $Q_2Q_1F_1^{*}F$ is elliptic on $\triangle\cup\text{Gr}(\mathcal{C}^4)$ and has order $2m-1$ on $\text{Gr}(\mathcal{C}) \cup \text{Gr}(\mathcal{C}^2)\cup \text{Gr}(\mathcal{C}^3)$ by  solving the following system,
\begin{equation}
    \sigma_{Q_2Q_1F_1^{*}F}|_\triangle =\omega^{2m}\label{req1}
\end{equation}
\begin{equation}
    \sigma_{Q_2Q_1F_1^{*}F}|_{\text{Gr}(\mathcal{C})} = 0\label{req2}
\end{equation}
\begin{equation}
    \sigma_{Q_2Q_1F_1^{*}F}|_{\text{Gr}(\mathcal{C}^2)} = 0\label{req3}
\end{equation}
\begin{equation}
    \sigma_{Q_2Q_1F_1^{*}F}|_{\text{Gr}(\mathcal{C}^3)} = 0\label{req4}
\end{equation}

There is one contribution to $\triangle$ from $\triangle\circ \triangle$, so \eqref{req1} gives,
 \begin{equation*}
      \sigma_{Q_2Q_1F_1^{*}F}|_\triangle = \sigma_{Q_2Q_1F_1^{*}F}|_{\triangle\circ\triangle}=\sigma_{Q_2}|_{\triangle}\sigma_{Q_1F_1^{*}F}|_{\triangle} = \omega^{2m}
\end{equation*}

There are two contributions to $\text{Gr}(\mathcal{C})$: $\triangle\circ\mathcal{C}$ and $\mathcal{C}\circ\triangle$, so \eqref{req2} gives,
 \begin{equation*}
      \sigma_{Q_2Q_1F^{*}F}|_{\text{Gr}(\mathcal{C})} = \sigma_{Q_2Q_1F^{*}F}|_{\text{Gr}(\mathcal{C})\circ\triangle}+\sigma_{Q_2Q_1F^{*}F}|_{\triangle\circ\text{Gr}(\mathcal{C})}=\sigma_{Q_2}|_{\text{Gr}(\mathcal{C})}\sigma_{Q_1F_1^{*}F}|_{\triangle}+\sigma_{Q_2}|_{\triangle}\sigma_{Q_1F_1^{*}F}|_{\text{Gr}(\mathcal{C})}
\end{equation*}
Using the facts that $\sigma_{Q_1F_1^{*}F}|_{\text{Gr}(\mathcal{C})} = 0$ and $\sigma_{Q_1F_1^{*}F}|_{\triangle} \neq 0$ gives,
\begin{equation*}
    \sigma_{Q_2}|_{\text{Gr}(\mathcal{C})} = 0
\end{equation*}

There are three contributions to $\text{Gr}(\mathcal{C}^2)$: $\triangle\circ\mathcal{C}^2$ and $\mathcal{C}^2\circ\triangle$ and $\mathcal{C}\circ\mathcal{C}$, so \eqref{req3} gives,
\begin{equation*}
      \sigma_{Q_2Q_1F^{*}F}|_{\text{Gr}(\mathcal{C}^2)} = \sigma_{Q_2Q_1F^{*}F}|_{\text{Gr}(\mathcal{C}^2)\circ\triangle}+\sigma_{Q_2Q_1F^{*}F}|_{\triangle\circ\text{Gr}(\mathcal{C}^2)} + \sigma_{Q_2Q_1F^{*}F}|_{\text{Gr}(\mathcal{C})\circ\text{Gr}(\mathcal{C})}
\end{equation*}
\begin{equation*}
     =\sigma_{Q_2}|_{\text{Gr}(\mathcal{C}^2)}\sigma_{Q_1F_1^{*}F}|_{\triangle}+\sigma_{Q_2}|_{\triangle}\sigma_{Q_1F_1^{*}F}|_{\text{Gr}(\mathcal{C}^2)} + \sigma_{Q_2}|_{\text{Gr}(\mathcal{C})} \sigma_{Q_1F_1^{*}F}|_{\text{Gr}(\mathcal{C})}
\end{equation*}
Using $\sigma_{Q_1F_1^{*}F}|_{\text{Gr}(\mathcal{C})} = 0$ we have 
\begin{equation*}
     =\sigma_{Q_2}|_{\text{Gr}(\mathcal{C}^2)}\sigma_{Q_1F_1^{*}F}|_{\triangle}+\sigma_{Q_2}|_{\triangle}\sigma_{Q_1F_1^{*}F}|_{\text{Gr}(\mathcal{C}^2)} = 0
\end{equation*}

There are two contributions to $\text{Gr}(\mathcal{C}^3)$: $\mathcal{C}^2\circ\mathcal{C}$ and $\mathcal{C}\circ\mathcal{C}^2$, so \eqref{req4} gives,
 \begin{equation*}
      \sigma_{Q_2Q_1F^{*}F}|_{\text{Gr}(\mathcal{C}^3)} = \sigma_{Q_2Q_1F^{*}F}|_{\text{Gr}(\mathcal{C}^2)\circ\text{Gr}(\mathcal{C})}+\sigma_{Q_2Q_1F^{*}F}|_{\text{Gr}(\mathcal{C})\circ\text{Gr}(\mathcal{C}^2)}
\end{equation*}
\begin{equation*}
    =\sigma_{Q_2}|_{\text{Gr}(\mathcal{C})}\sigma_{Q_1F_1^{*}F}|_{\triangle}+\sigma_{Q_2}|_{\triangle}\sigma_{Q_1F_1^{*}F}|_{\text{Gr}(\mathcal{C})}
\end{equation*}
As we already have that $\sigma_{Q_1F_1^{*}F}|_{\text{Gr}(\mathcal{C})} = 0$ and $\sigma_{Q_2}|_{\text{Gr}(\mathcal{C})} = 0$ this reduces to $0$ and we are left with the previous three equations in our system. So we now solve
\begin{equation*}
    \sigma_{Q_2}|_{\triangle}\sigma_{Q_1F_1^{*}F}|_{\triangle} = \omega^{2m}
\end{equation*}
\begin{equation*}
     \sigma_{Q_2}|_{\text{Gr}(\mathcal{C}^2)}\sigma_{Q_1F_1^{*}F}|_{\triangle}+\sigma_{Q_2}|_{\triangle}\sigma_{Q_1F_1^{*}F}|_{\text{Gr}(\mathcal{C}^2)} = 0
\end{equation*}
for $\sigma_{Q_2}|_{\triangle}$ and $\sigma_{Q_2}|_{\text{Gr}(\mathcal{C}^2)}$.

\begin{equation*}
    \begin{pmatrix}
        \sigma_{Q_1F_1^{*}F}|_{\triangle} & 0 \\
        \sigma_{Q_1F_1^{*}F}|_{\text{Gr}(\mathcal{C}^2)} & \sigma_{Q_1F_1^{*}F}|_{\triangle}
    \end{pmatrix}
    \begin{pmatrix}
        \sigma_{Q_2}|_{\triangle}\\
        \sigma_{Q_2}|_{\text{Gr}(\mathcal{C}^2)}
    \end{pmatrix}
    =\begin{pmatrix}
        \omega^{2m} \\
        0
    \end{pmatrix}
\end{equation*}
which can be solved, once again using cramer's rule.

At the $i^{th}$ iteration we wish find $Q_i$ of order 0 from the same class as $Q_{i-1}\dots Q_1 F_1^{*}F$ such that $Q_iQ_{i-1}\dots Q_1$ is elliptic on $\triangle \cup \text{Gr}(\mathcal{C}^{2^i})$ and has order $2m-1$ on $\text{Gr}(\mathcal{C}^{j})$ for $1\leq j\leq 2^i$. So we must solve the system,

\begin{equation*}
    \sigma_{Q_iQ_{i-1}\dots Q_1F_1^*F}|_{\triangle} = \omega^{2m}
\end{equation*}
\begin{equation*}
    \sigma_{Q_iQ_{i-1}\dots Q_1F_1^*F}|_{\text{Gr}(\mathcal{C}^{j})} = 0
\end{equation*}
for $1\leq j< 2^i$.

Since $ \sigma_{Q_{i-1}\dots Q_1F_1^*F}|_{\text{Gr}(\mathcal{C}^j)} = 0$ and $\sigma_{Q_i}|_{\text{Gr}(\mathcal{C}^{j})} = 0$ we have to solve the two-dimensional system,

\begin{equation*}
    \sigma_{Q_iQ_{i-1}\dots Q_1F_1^*F}|_{\triangle} = \omega^{2m}
\end{equation*}
\begin{equation*}
    \sigma_{Q_iQ_{i-1}\dots Q_1F_1^*F}|_{\text{Gr}(\mathcal{C}^{2^{i-1}})} = 0
\end{equation*}
Solving this system then for the amplitude of $Q_i$ facilitates the construction of the desired operator at the $i^{th}$ iteration of this process.

\section{Numerical Experiments}\label{numerics}

We will now verify the theoretical results discussed in this article by implementing them numerically in MATLAB. For our purposes here, to simulate radar data we construct an algorithm representing an approximated version of the forward operator given in equation \eqref{forward}. We denote this simplified version of the forward operator, $F$, as $\tilde{F}$. To construct $\tilde{F}$, we ignore the effects of the amplitude in the forward operator by setting $A_i(\mathbf{x}, \omega, r) = 1$ in \eqref{forward}. Furthermore, we also assume that the antenna is fully broadband such that $\omega \in (-\infty, \infty)$. This allows us to write the oscillatory integral in terms of a Dirac delta distribution. Under these assumptions the simplified forward operator is given by
\begin{equation}
		\tilde{F}V(r,t) \propto \su{i=1,2}{}{\itg{}{}{\delta\left(t-\frac{\vert \mathbf{x}-\gamma(\mathbf{r}) \vert }{c_0} - \frac{\vert \mathbf{x} - \mathbf{E}_i \vert}{c_0}\right)V(\mathbf{x})d\mathbf{x}}}\label{simplified}
\end{equation}

The effect of $\tilde F$ on $V$ in \eqref{simplified} is to integrate $V$ over ellipsoidal surfaces with foci located at $\gamma(\mathbf{r})$ and $\mathbf{E}_i$. Points on this ellipsoid satisfy $| \mathbf{x}-\gamma(\mathbf{r}) | + |\mathbf{x} - \mathbf{E}_i| = c_0t$. Applying this to our input scene will provide a simulation for raw radar data, that would be recorded as a result of emitters located at $\mathbf{E}_1$ and $\mathbf{E}_2$, before any image reconstruction is applied.

Following this, we implement a backprojection algorithm that is applied to the synthetic radar data to produce the simulated image. We choose this backprojection operator to be a simplified version of $F_1^{*}$, which we will denote as $\tilde{F}_1^{*}$. We know from our earlier discussions that applying such an operator to our data will result in both a diagonal and a mixed term, which should respectively correspond to true reconstruction and artifacts as a result of crosstalk between the emitters. The assumptions made to simplify $F_1^{*}$ are the same as those used to obtain \eqref{simplified} as an approximation to \eqref{forward}. The resulting image will then be given by

\begin{equation}
    \tilde{F}_1^{*}F_1V(\mathbf{z}) \propto \itg{}{}{\delta\left(t-\frac{\vert \mathbf{z}-\gamma(\mathbf{r}) \vert }{c_0} - \frac{\vert \mathbf{z} - \mathbf{E}_1 \vert}{c_0}\right)\tilde{F}V(r,t)d\mathbf{r}dt}.\label{simplified_image}
\end{equation}

Finally, once we have produced the desired images, we will apply the artifact mitigation techniques from sections \ref{data_acquisition approach} and \ref{Artifact displacement approach}. We will draw comparison between the results obtained when these methods are used and when they are not.

\subsection{Impact of crosstalk}\label{crosstalk_simulations}
We begin by highlighting the effects of crosstalk by comparing images when crosstalk is ignored with those when crosstalk is accounted for.  When ignoring the effects of crosstalk we generate the data as if it were produced by a just one emitter only, and there was no second, interfering emitter. This corresponds to omitting $i = 2$ in \eqref{simplified}, and \eqref{simplified_image} yields only a diagonal term. To include the effects of crosstalk we apply \eqref{simplified} as it is stated, as if there were two emitters. This results in a diagonal and mixed term in  \eqref{simplified_image}, the latter of which models the impact of crosstalk. 

In both cases we let our input scene, $V(\mathbf{x})$, be a spherical Gaussian function centred at the point $\mathbf{x} = (0, 2, 3)$, that is, $V(\mathbf{x}) = e^{-x_1^2 - (x_2-2)^2 -(x_3-3)^2}$. This function decays rapidly without the presence of a singularity, however, our simulations illustrate that our methods for mitigating artifacts are still remarkably effective for such a scatterer. Slices through the resulting three-dimensional image when crosstalk is ignored are shown in figure \ref{fig:no_cross}. As expected, there is concentration at the location of the original scatterer. However, there is some smearing around this point due to the fixed height flight track only providing a limited angular view of the scatterer.

We contrast these results with the images shown in figure \ref{fig:image}, which show similar slices through the three dimensional image where cross talk has now been taken into account. Here, the same concentration at the location of the true scatterer can be seen, however, artifacts are now clearly present also.

\begin{figure}

\begin{minipage}{.5\linewidth}
\centering
\subfloat[]{\label{no_cross_xy}\includegraphics[width=8cm]{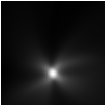}}
\end{minipage}%
\begin{minipage}{.5\linewidth}
\centering
\subfloat[]{\label{no_cross_xz}\includegraphics[width=8cm]{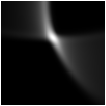}}
\end{minipage}\par\medskip
\centering
\subfloat[]{\label{no_cross_yz}\includegraphics[width=8cm]{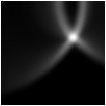}} 

\caption{Slices through the image with no cross talk at $x_3 = 3$, $x_2 = 2$ and $x_1 = 0$ respectively.}
\label{fig:no_cross}
\end{figure}

\begin{figure}

\begin{minipage}{.5\linewidth}
\centering
\subfloat[]{\label{image_xy}\includegraphics[width=8cm]{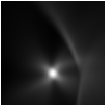}}
\end{minipage}%
\begin{minipage}{.5\linewidth}
\centering
\subfloat[]{\label{image_xz}\includegraphics[width=8cm]{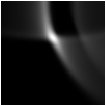}}
\end{minipage}\par\medskip
\centering
\subfloat[]{\label{image_yz}\includegraphics[width=8cm]{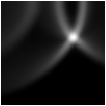}} 

\caption{Slices through the image with cross talk at $x_3 = 3$, $x_2 = 2$ and $x_1 = 0$ respectively.}
\label{fig:image}
\end{figure}

\subsection{Limited data acquisition geometry}
Here, we show how artifacts can be ameliorated using our results from section \ref{data_acquisition approach}. To implement the data acquisition geometry approach we include condition \eqref{subset_condition} in our code and use a cutoff function to mute data satisfying this condition in line with our earlier discussions. We choose the value of $H$ in \eqref{subset_condition} to determine a range in height containing our reflectivity function, within which there will be no artifacts present in the final image.

 When implementing our data acquisition geometry approach to mitigating artifacts we opt for a different kind of reflectivity function than what we used previously. In this simulation we choose the input scene to be a small and narrow, rectangular indicator function located in the region $\{(x_1, x_2, x_3) | -2 < x_1 < 2,\text{ } -1 < x_2 < 1,\text{ } 0 < x_3 < 1/2 \}$. The reason for this choice is due to this method requiring us to omit certain portions of the data before backprojection. As mentioned previously, the fixed height flight track gives us limited views of the scene and results in some degradation in the quality of the image. Omitting even more data here will accentuate this issue. A narrow, rectangular indicator function will mitigate this issue somewhat as, away from the edges and corners, it approximates a distribution with only one singular direction, meaning less views from different angles should be required for the main body of such an object to appear fully formed in the image.

 Figure \ref{without} shows a vertical slice through the image without using this artifact mitigation technique and \ref{with} shows the same slice but with the use of the data acquisition geometry approach to ameliorating the artifacts. It can be seen that using the method has eliminated the artifact as desired. However, as a result of using less data, the reconstruction of the original input scene is slightly more smeared and not quite as defined as when all data is used. 

\begin{figure}

\begin{minipage}{.5\linewidth}
\centering
\subfloat[]{\label{without}\includegraphics[width=8cm]{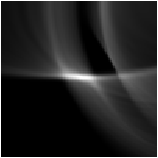}}
\end{minipage}%
\begin{minipage}{.5\linewidth}
\centering
\subfloat[]{\label{with}\includegraphics[width=8cm]{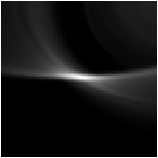}}
\end{minipage}\par\medskip

\caption{Slice through the image before and after artifacts are removed as a result of the data acquisition geometry approach.}
\label{fig:data_acquisition_approach}
\end{figure}

\subsection{Artifact displacement}
Lastly, we numerically implement a single iteration of the artifact displacement method discussed in section \ref{Artifact displacement approach}. This is done for the same setup as in section \ref{crosstalk_simulations} and the images shown in figure \ref{fig:image} will act as references as to what the image looks like before the artifact displacement algorithm has been applied. We proceed with an example calculation which shows how the operator $Q_1$ can be constructed

We recall that $Q_1 \in I^{0}(\triangle)+I^{0}(\text{Gr}(\mathcal{C}))$. Our simulated image is then given by

\begin{equation*}
    I(\mathbf{z}) =\tilde{F}_1^{*}\tilde{F}V =  \tilde{F}_1^{*}(\tilde{F}_1+\tilde{F}_2)V ,
\end{equation*}
\begin{equation*}
     = \tilde{F}_1^{*}\tilde{F}_1V+\tilde{F}_1^{*}F_2V ,
\end{equation*}

By arranging our experiment such that the plane $\pi$ is separated from the region of interest we ensure $\triangle \cap\mathcal{C} = \emptyset$. With this in mind and recalling that each $\tilde{F}_i$ has amplitude equal to 1, we have that

\begin{equation*} \sigma_{\tilde{F}_1^{*}\tilde{F}}|_{\triangle} = \sigma_{\tilde{F}_1^{*}\tilde{F}_1} = 1,
\end{equation*}
\begin{equation*}
     \sigma_{\tilde{F}_1^{*}\tilde{F}}|_{\text{Gr}(\mathcal{C})}=\sigma_{\tilde{F}_1^{*}\tilde{F}_2} = 1,
\end{equation*}
Using this in equations \eqref{amp_diag} and \eqref{amp_mixed},
\begin{equation*}
    \sigma_{Q_1}|_{\triangle} = 1,
\end{equation*}
\begin{equation*}
     \sigma_{Q_1}|_{\text{Gr}(\mathcal{C})}=-1,
\end{equation*}
This means, for the part of $Q_1$ associated with $\triangle$ we apply a term with the same phase as $\tilde{F}_1^{*}\tilde{F}_1$ (a pseudodifferential operator) with amplitude 1. For the part of $Q_1$ associated with $\text{Gr}(\mathcal{C})$ we apply a term with the same phase as $\tilde{F}_1^{*}\tilde{F}_2$ and with amplitude -1. This results in

 \begin{equation*}
     Q_1I(\mathbf{z}) = \tilde{F}_1^{*}\tilde{F}_1(\tilde{F}_1^{*}\tilde{F}_1V + \tilde{F}_1^{*}\tilde{F}_2 V) - \tilde{F}_1^{*}\tilde{F}_2(\tilde{F}_1^{*}\tilde{F}_1V + \tilde{F}_1^{*}\tilde{F}_2 V),
 \end{equation*}
 
  \begin{equation*}
     = (\tilde{F}_1^{*}\tilde{F}_1)(\tilde{F}_1^{*}\tilde{F}_1)V + (\tilde{F}_1^{*}\tilde{F}_1)(\tilde{F}_1^{*}\tilde{F}_2)V - (\tilde{F}_1^{*}\tilde{F}_2)(\tilde{F}_1^{*}\tilde{F}_1)V - (\tilde{F}_1^{*}\tilde{F}_2)(\tilde{F}_1^{*}\tilde{F}_2)V,
 \end{equation*}
 
Cancellation occurs with terms involving $(\tilde{F}_1^{*}\tilde{F}_1)(\tilde{F}_1^{*}\tilde{F}_2)$, which are both associated with $\text{Gr}(\mathcal{C})$, and corresponds to the removal of the original artifacts in the image as desired. Following this we are left with

  \begin{equation}
     I(\mathbf{z}) = (\tilde{F}_1^{*}\tilde{F}_1)(\tilde{F}_1^{*}\tilde{F}_1)V - (\tilde{F}_1^{*}\tilde{F}_2)(\tilde{F}_1^{*}\tilde{F}_2)V.\label{resulting_image}
 \end{equation}

We have that $(\tilde{F}_1^{*}\tilde{F}_1)(\tilde{F}_1^{*}\tilde{F}_1) \in I^{0}(\triangle)$ and the first term in the above will correspond to true reconstruction in the image. We also have that $(\tilde{F}_1^{*}\tilde{F}_2)(\tilde{F}_1^{*}\tilde{F}_2) \in I^{0}(\mathcal{C}^2)$ and the second term in the above will result in the displaced artifacts, that are further away from the ROI. Slices through the image after this algorithm has been applied are shown in figure \ref{fig:displacement}. To handle displaying negative image values that can arise due to the negative amplitude in the second term of \eqref{resulting_image} the absolute value of the values in the final image are taken. Comparing these images to those in figure \ref{fig:image}, it is clear that the crosstalk artifacts have been significantly reduced, as expected.

\begin{figure}

\begin{minipage}{.5\linewidth}
\centering
\subfloat[]{\label{displacement_xy}\includegraphics[width=8cm]{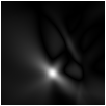}}
\end{minipage}%
\begin{minipage}{.5\linewidth}
\centering
\subfloat[]{\label{displacement_yz}\includegraphics[width=8cm]{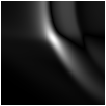}}
\end{minipage}\par\medskip
\centering
\subfloat[]{\label{displacement_xz}\includegraphics[width=8cm]{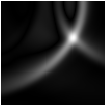}} 

\caption{Slices through the image with after the artifact displacement procedure has been applied.}
\label{fig:displacement}
\end{figure}

\section{Conclusions \& discussion}
In many SAR experiments, crosstalk between emitters has the potential to be a major detriment to the quality of the reconstruction. For example, when imaging in urban environments where there is a high density of signals there is a greater possibility for external, uncontrolled signals to corrupt the radar data. Similar issues can arise in a variety of SAR experiments, such as in passive imaging scenarios where uncontrolled signals are relied upon to produce a reconstruction. To tackle this, we have used a model, taking the form of an FIO, for the radar data that is recorded when scattered waves that emanate from two, always on, sources of illumination are measured. In section \ref{surface_section} we used microlocal analysis to analyse the features that are present in the resulting image when a backprojection algorithm, that assumes the data cannot be separated in the contributions from each emitter, is applied to the data. Through microlocal methods we were able to determine the location of artifacts in the image as a result of a given point scatterer.

We then leveraged our knowledge regarding the locations of crosstalk artifacts in section \ref{mitigation} to develop methods that can mitigate their harmful impact on the quality of reconstructions. In particular, we used two methods to achieve this where the first of which utilised the fact that artifacts are contained on a surface whose shape depends on the geometry of the flight track. By carefully designing the experiment, we showed how it was possible to yield a surface of artifacts that did not intersect the ROI, leaving it artifact free. Following this, we discussed our second artifact mitigation technique in which a further series of FIOs can be applied after backprojection that progressively displace artifacts away from the desired ROI. We discussed the constraints on the experimental design for this method to be applicable and how also how beam forming can be used to extend the range of scenarios in which it can be used. Finally, in section \ref{numerics} we carried out numerical experiments in MATLAB that highlight the effectiveness of our two methods for combatting crosstalk.

Both of the approaches that we have discussed in this article are effective for minimising the harmful effects of crosstalk between signals when imaging a ROI. However, due to each methods own limitations there may be situations where one is preferable over the other. For example, the data acquisition geometry approach requires that only data satisfying certain constraints is used. This creates a limitation on the number of views of the ROI there are and on the angles from which these views are taken, which can cause a degradation in the quality of the image. If possible, the artifact displacement approach will be preferable as it will not restrict the data that can be used in the backprojection. However, it may not always be possible to meet the conditions required for the artifact displacement approach to be viable. This is particularly true in passive imaging experiments where one cannot control the locations of the sources that they are using. In situations such as this where artifacts cannot be displaced, crosstalk artifacts can still be dealt with, with the compromise of using more limited data along in our data acquisition geometry approach.

\section*{Acknowledgments}
This publication has emanated from research conducted with the financial support of Taighde Éireann – Research Ireland under Grant number 18/CRT/6049. For the purpose of Open Access, the author has applied a CC BY public copyright licence to any Author Accepted Manuscript version arising from this submission.

\newpage
\bibliographystyle{unsrt}  
\bibliography{references}  

\begin{thebibliography}{10}

\bibitem{SARinversion}
Clifford~J Nolan and Margaret Cheney.
\newblock Synthetic aperture inversion.
\newblock {\em Inverse problems}, 18(1):221--235, 2002.

\bibitem{tutorial}
Margaret Cheney.
\newblock A mathematical tutorial on synthetic aperture radar.
\newblock {\em SIAM review}, 43(2):301--312, 2001.

\bibitem{nonflat}
Clifford~J Nolan and Margaret Cheney.
\newblock Synthetic aperture inversion for arbitrary flight paths and nonflat topography.
\newblock {\em IEEE Transactions on Image Processing}, 12(9):1035--1043, 2003.

\bibitem{nolan2004microlocal}
Clifford~J Nolan and Margaret Cheney.
\newblock Microlocal analysis of synthetic aperture radar imaging.
\newblock {\em Journal of Fourier Analysis and Applications}, 10:133--148, 2004.

\bibitem{fundamentals}
Margaret Cheney and Brett Borden.
\newblock {\em Fundamentals of radar imaging}, volume~79 of {\em CBMS-NSF Regional Conference Series in Applied Mathematics}.
\newblock Society for Industrial and Applied Mathematics (SIAM), Philadelphia, PA, 2009.

\bibitem{Waddington_2020}
Chad Waddington, Margaret Cheney, and James~A Given.
\newblock Synthetic aperture source localization.
\newblock {\em Inverse Problems}, 36(1):015007, dec 2019.

\bibitem{Duistermaat2011}
J.J. Duistermaat.
\newblock {\em Fourier Integral Operators}.
\newblock Birkhäuser Boston, 2011.

\bibitem{grigis1994microlocal}
Alain Grigis and Johannes Sj{\"o}strand.
\newblock {\em Microlocal analysis for differential operators: an introduction}, volume 196.
\newblock Cambridge University Press, 1994.

\bibitem{TrevesJean-François1980ItPa}
Jean-François Treves.
\newblock {\em Introduction to Pseudodifferential and Fourier Integral Operators: Pseudodifferential Operators}.
\newblock University Series in Mathematics. Springer, Boston, MA, 1 edition, 1980.

\bibitem{hormander2009analysis}
Lars H{\"o}rmander.
\newblock {\em The analysis of linear partial differential operators IV: Fourier integral operators}.
\newblock Springer, 2009.

\bibitem{krishnan2015microlocal}
Venkateswaran~P Krishnan and Eric~Todd Quinto.
\newblock Microlocal analysis in tomography.
\newblock {\em Handbook of mathematical methods in imaging}, 1:3, 2015.

\bibitem{greenleaf.uhlmann.90a}
A.~Greenleaf and G.~Uhlmann.
\newblock Composition of some singular {F}ourier integral operators and estimates for restricted {X}-ray transforms.
\newblock {\em Ann. Inst. Fourier (Grenoble)}, 40(2):443--466, 1990.

\bibitem{Felea01112005}
Raluca Felea.
\newblock Composition of fourier integral operators with fold and blowdown singularities.
\newblock {\em Communications in Partial Differential Equations}, 30(12):1717--1740, 2005.

\bibitem{Nolan01011997}
C.J. Nolan and W.W. Symes.
\newblock Global solution of a linearized inverse problem for the wave equation.
\newblock {\em Communications in Partial Differential Equations}, 22(5-6):127--149, 1997.

\bibitem{plamen}
Plamen Stefanov and Gunther Uhlmann.
\newblock Is a curved flight path in sar better than a straight one?
\newblock {\em SIAM Journal on Applied Mathematics}, 73(4):1596--1612, 2013.

\bibitem{1512397}
B.~Hallberg, G.~Smith-Jonforsen, and L.M.H. Ulander.
\newblock Measurements on individual trees using multiple vhf sar images.
\newblock {\em IEEE Transactions on Geoscience and Remote Sensing}, 43(10):2261--2269, 2005.

\bibitem{Felea_2007}
Raluca Felea.
\newblock Displacement of artefacts in inverse scattering.
\newblock {\em Inverse Problems}, 23(4):1519, jun 2007.

\bibitem{Herman}
Gabor~T Herman, Heang~K Tuy, and Karl-Jorg Langenberg.
\newblock Basic methods of tomography and inverse problems.
\newblock {\em Malvern Physics Series}, 1987.

\bibitem{Langenberg}
KJ~Langenberg, M~Brandfass, K~Mayer, T~Kreutter, A~Br{\"u}ll, P~Felinger, and D~Huo.
\newblock Principles of microwave imaging and inverse scattering.
\newblock {\em EARSeL Advances in Remote Sensing}, 2(1-I):163--186, 1993.

\bibitem{ambartsoumian2013class}
Gaik Ambartsoumian, Raluca Felea, Venkateswaran~P Krishnan, Clifford Nolan, and Eric~Todd Quinto.
\newblock A class of singular fourier integral operators in synthetic aperture radar imaging.
\newblock {\em Journal of Functional Analysis}, 264(1):246--269, 2013.

\bibitem{ambartsoumian2018singular}
Gaik Ambartsoumian, Raluca Felea, Venkateswaran~P Krishnan, Clifford~J Nolan, and Eric~Todd Quinto.
\newblock Singular fios in sar imaging, ii: Transmitter and receiver at different speeds.
\newblock {\em SIAM Journal on Mathematical Analysis}, 50(1):591--621, 2018.

\bibitem{yarman2007bistatic}
Can~Evren Yarman, Birsen Yazici, and Margaret Cheney.
\newblock Bistatic synthetic aperture radar imaging for arbitrary flight trajectories.
\newblock {\em IEEE Transactions on Image Processing}, 17(1):84--93, 2007.

\bibitem{Golubitsky1973}
Martin Golubitsky and Victor Guillemin.
\newblock {\em Stable Mappings and Their Singularities}.
\newblock Springer New York, NY, New York, NY, 1973.

\bibitem{cusp}
Raluca Felea and Cliff Nolan.
\newblock Monostatic sar with fold/cusp singularities.
\newblock {\em Journal of Fourier Analysis and Applications}, 21(4):799--821, 2015.

\bibitem{EricToddQuinto2013InverseProblemsandImaging}
Eric~Todd Quinto and Hans Rullgård.
\newblock Local singularity reconstruction from integrals over curves in $\mathbb{R}^3$.
\newblock {\em Inverse Problems and Imaging}, 7(2):585--609, 2013.

\bibitem{hormander3}
Lars. H\"ormander.
\newblock {\em The analysis of linear partial differential operators, III}.
\newblock Grundlehren der Mathematischen Wissenschaften ; 274. Springer-Verlag, Berlin, 1985.

\bibitem{1991Eitt}
Xavier~Saint Raymond.
\newblock {\em Elementary introduction to the theory of pseudodifferential operators.}
\newblock CRC, 1991.

\bibitem{symes.98}
William~W Symes.
\newblock Mathematics of reflection seismology.
\newblock Technical report, Rice University, 1998.

\end{thebibliography}

\end{document}